\newtheorem{thm}{Theorem}
\newtheorem{lem}[thm]{Lemma}
\newtheorem{defn}[thm]{Definition}
\newtheorem{remark}[thm]{Remark}
\theoremstyle{remark}
\begin{document}

\title{On the finite element approximation of fourth order singularly
perturbed eigenvalue problems\\
}
\author{Hans-G\"{o}rg Roos \\
Institute of Numerical Mathematics\\
Technical University of Dresden\\
Dresden\\
GERMANY\\
\and Despo Savvidou and Christos Xenophontos\thanks{%
Corresponding author. Email: xenophontos@ucy.ac.cy} \\
Department of Mathematics and Statistics\\
University of Cyprus \\
P.O. BOX 20537\\
Nicosia 1678\\
CYPRUS}
\maketitle

\begin{abstract}
We consider fourth order singularly perturbed eigenvalue problems in
one-dimension and the approximation of their solution by the $h$ version of
the Finite Element Method (FEM). In particular, we use piecewise Hermite
polynomials of degree $p\geq 3$ defined on an {\emph{exponentially graded}}
mesh. We show that the method converges uniformly, with respect to the
singular perturbation parameter, at the optimal rate when the error in the
eigenvalues is measured in absolute value and the error in the eigenvectors
is measured in the energy norm. We also illustrate our theoretical findings
through numerical computations for the case $p=3$.
\end{abstract}

\textbf{Keywords}: fourth order singularly perturbed eigenvalue problem;
boundary layers; finite element method; exponentially graded mesh; uniform
convergence

\textbf{MSC2010}: 65N30

\section{Introduction}

\label{intro}

Singularly perturbed boundary value problems, and their numerical solution,
is a much studied topic in the last few decades (see the books \cite{mos}, 
\cite{morton}, \cite{rst} and the references therein). It is well known that
a main difficulty in the approximation to the solution of these problems is
the presence of \emph{boundary layers} in the solution. In order for the
approximate solution to be considered reliable, it must account for these
layers. In the context of the Finite Element Method (FEM), the robust
approximation of boundary layers requires either the use of the $h$ version
on non-uniform, {\emph{layer-adapted}} meshes (such as the Shishkin \cite%
{Shishkin2} or Bakhvalov \cite{B} mesh), or the use of the high order $p$
and $hp$ versions on specially designed (variable) meshes \cite{ss}. One
other layer-adapted mesh that has appeared in the literature is the \emph{%
exponentially graded} mesh (eXp) \cite{x0}. The finite element analysis on
this mesh appears in \cite{CX} for one-dimensional reaction-diffusion and
convection-diffusion problems, in \cite{xfl} for a two-dimensional
convection-diffusion problem posed in a square and in \cite{xfl1} for
two-dimensional reaction-diffusion problems posed in smooth domains. All the
aforementioned works concern second order singularly perturbed problems.
Only recently have fourth order singularly perturbed problems truly
attracted the attention of the numerical analysis research community (see,
e.g., \cite{CVX, FR, FR2, PZMX, X} for some recent results and \cite{Roos1,
RoosStynes, SunStynes} for some earlier results). In \cite{X} the finite
element analysis for a one-dimensional fourth order problem was carried out
on the eXp mesh, in the context of the $h$ version with piecewise
polynomials of degree $p\geq 3$. The purpose of this article is to extend
the results of \cite{X} to one-dimensional fourth order singularly perturbed 
\emph{eigenvalue} problems. To our knowledge, numerical analysis results for
such problems are scarce in the literature. The only relevant ones we could
find are the following: \cite{rod} in which a hybrid scheme based on
asymptotic expansions is employed in order to solve the thin hanging rod
problem and \cite{Roos} where the author presents a finite element
discretization of problem (\ref{de})--(\ref{bc}) (see ahead), using a
Shishkin mesh and polynomials of degree $p=3$. We will present a finite
element discretization using the eXp mesh and polynomials of degree $p\geq 3$%
, proving robust, optimal convergence in both the eigenvalues and the
eigenvectors, assuming they are simple. The error in the eigenvalues is
shown to decrease at the (expected) double rate, and the error in the
eigenvectors, measured in the energy norm, decreases at the optimal rate;
both do so independently of the singular perturbation parameter $\varepsilon 
$.

The rest of the paper is organized as follows: in Section \ref{model} we
present the model problem and its regularity. The discretization using the
exponentially graded mesh is presented in Section \ref{mesh} and in Section %
\ref{approx} we present our main results of parameter robust convergence in
the eigenvalues and the eigenvectors. Section \ref{nr} shows the results of
some numerical computations that illustrate the theoretical findings and in
Section \ref{concl} we give our conclusions.

With $I\subset \mathbb{R}$ a bounded open interval with boundary $\partial I$
and measure $|I|$, we will denote by $C^{k}(I)$ the space of continuous
functions on $I$ with continuous derivatives up to order $k$. We will use
the usual Sobolev spaces $H^{k}(I)=W^{k,2}(I)$ of functions on $I$ with $%
0,1,2,...,k$ generalized derivatives in $L^{2}(I)$, equipped with the norm
and seminorm $\left\Vert \cdot \right\Vert _{k,I}$ and $\left\vert \cdot
\right\vert _{k,I}$, respectively. We will also use the space 
\begin{equation*}
H_{0}^{k}\left( I\right) =\left\{ u\in H^{k}\left( I\right) :\left.
u^{(i)}\right\vert _{\partial I}=0,i=0,...,k-1\right\} .
\end{equation*}%
The norm of the space $L^{\infty }(I)$ of essentially bounded functions is
denoted by $\Vert \cdot \Vert _{\infty ,I}$. Finally, the notation
\textquotedblleft $a\lesssim b$\textquotedblright\ means \textquotedblleft $%
a\leq Cb$\textquotedblright\ with $C$ being a generic positive constant,
independent of any discretization or singular perturbation parameters and
possibly having different values in each occurrence -- dependence on various
other constants will be indicated.


\section{The model problem and its regularity\label{model}}

We consider the following \emph{eigenvalue problem}: Find $0\neq u(x)\in
C^{4}(I),\lambda \in \mathbb{C}$ such that 
\begin{equation}
\varepsilon ^{2}u^{(4)}(x)-\left( a(x)u^{\prime }(x)\right) ^{\prime
}+b(x)u(x)=\lambda u(x)\text{ in }I=(0,1),  \label{de}
\end{equation}%
along with the boundary conditions 
\begin{equation}
u(0)=u^{\prime }(0)=u^{\prime }(1)=u(1)=0.  \label{bc}
\end{equation}%
The parameter $0<\varepsilon \leq 1$ is given, as are the functions $a,b>0,$
which are assumed to be sufficiently smooth on the closed interval $%
\overline{I}=[0,1]$. Moreover, we assume that $\exists \;a_{0}\in \mathbb{R}$
such that%
\begin{equation*}
a(x)\geq a_{0}>0,b(x)\geq 0\;\forall x\in \overline{I}.
\end{equation*}%
It is easy to see that the problem (\ref{de}), (\ref{bc}) is self-adjoint.
As a result, the behavior of the eigenvalues is simplified, as $\varepsilon
\rightarrow 0$, as follows: for all positive eigenvalues $\lambda
_{k}(\varepsilon )$ there holds $\lim_{\varepsilon \rightarrow 0}\lambda
_{k}(\varepsilon )=\lambda _{k}(0)$. The values $\lambda _{k}(0)$ are the
eigenvalues of the reduced problem and if they are real then so are the $%
\lambda _{k}(\varepsilon )$. Moreover, $\lambda _{k}(\varepsilon )$ can be
expanded in a power series in $\varepsilon $ (see \cite{moser} for details).

The variational formulation of (\ref{de}), (\ref{bc}) reads: Find $0 \neq
u_{k}\in H_{0}^{2}\left( I\right) ,\lambda _{k}\in \mathbb{C}$ such that 
\begin{equation}
{\mathcal{B}}_{\varepsilon }\left( u_{k},v\right) =\lambda _{k}\left\langle
u_{k},v\right\rangle _{I}\;\;\forall \;v\in H_{0}^{2}\left( I\right) ,
\label{BuvFv}
\end{equation}%
where, with $\left\langle \cdot ,\cdot \right\rangle _{I}$ the usual $%
L^{2}(I)$ inner product, 
\begin{equation}
{\mathcal{B}}_{\varepsilon }\left( u,v\right) =\varepsilon ^{2}\left\langle
u^{\prime \prime },v^{\prime \prime }\right\rangle _{I}+\left\langle
au^{\prime },v^{\prime }\right\rangle _{I}+\left\langle bu,v\right\rangle
_{I}.  \label{Buv}
\end{equation}%
It follows that the bilinear form ${\mathcal{B}}_{\varepsilon }\left( \cdot
,\cdot \right) $ given by (\ref{Buv}) is coercive with respect to the \emph{%
energy norm} 
\begin{equation*}
\left\Vert u\right\Vert _{E,I}^{2}:=\varepsilon ^{2}\left\vert u\right\vert
_{2,I}^{2}+\left\Vert u\right\Vert _{1,I}^{2}\;,\;u\in H_{0}^{2}\left(
I\right) ,
\end{equation*}%
i.e., there exists $\gamma \in \mathbb{R}^{+}$, independent of $\varepsilon$%
, such that 
\begin{equation*}
{\mathcal{B}}_{\varepsilon }\left( u,u\right) \geq \gamma \left\Vert
u\right\Vert _{E,I}^{2}\;\;\forall \;u\in H_{0}^{2}\left( I\right) .
\end{equation*}%
The eigenfunctions $u_{k}$ are sufficiently smooth in $I$ and their first
derivative features \emph{boundary layers} at the endpoints. This is
described in the following result.

\begin{thm}
\label{thm_reg} Let $u\equiv u_{k}\in H_{0}^{2}(I)$ satisfy (\ref{BuvFv}).
Then%
\begin{equation*}
u=u_{S}+u_{BL}^{L}+u_{BL}^{R},
\end{equation*}%
and for $j=0,1,2,...$%
\begin{equation*}
|u_{S}^{(j)}(x)|\lesssim C_{j}(k),|(u_{BL}^{L})^{(j)}(x)|\lesssim \bar{C}%
_{j}(k)\varepsilon ^{1-j}e^{-\beta x/\varepsilon
},|(u_{BL}^{R})^{(j)}(x)|\lesssim \hat{C}_{j}(k)\varepsilon ^{1-j}e^{-\beta
(1-x)/\varepsilon },
\end{equation*}%
where $C_{j},\bar{C}_{j},\hat{C}_{j},\beta $ are positive constants
independent of $\varepsilon $.
\end{thm}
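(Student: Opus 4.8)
The plan is to combine the classical method of matched asymptotic expansions with the shifted-argument regularity theory already developed for the corresponding boundary value problem in \cite{X}. First I would fix a simple eigenpair $(u_{k},\lambda_{k})$ and normalize it, say by $\|u_{k}\|_{0,I}=1$, and recall from \cite{moser} that $\lambda_{k}=\lambda_{k}(\varepsilon)$ admits a convergent power-series expansion $\lambda_{k}=\sum_{i\geq 0}\varepsilon^{i}\lambda_{k,i}$; in particular $\lambda_{k}$ is bounded uniformly in $\varepsilon$. Rewriting (\ref{de}) as
\[
\varepsilon^{2}u^{(4)}-(au')'+(b-\lambda_{k})u=0,
\]
I treat this as a homogeneous fourth order singularly perturbed equation whose reaction coefficient $b-\lambda_{k}$ is merely smooth (and not necessarily positive), which does not affect the layer structure, since that is governed by the $\varepsilon^{2}u^{(4)}-(au')'$ part alone.

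Second, I would construct the smooth part as a truncated outer expansion $u_{S}=\sum_{i=0}^{M}\varepsilon^{i}w_{i}(x)$. The leading term $w_{0}$ solves the reduced second order problem $-(aw_{0}')'+(b-\lambda_{k,0})w_{0}=0$ subject to the two stable Dirichlet conditions $w_{0}(0)=w_{0}(1)=0$, while the higher order $w_{i}$ solve the same operator with right-hand sides assembled from lower order terms. Because $\lambda_{k,0}$ is by definition an eigenvalue of this reduced operator, the operator is singular, so each $w_{i}$-problem is solvable only under a Fredholm orthogonality condition; these conditions are precisely what fix the eigenvalue corrections $\lambda_{k,i}$, in the familiar Rayleigh--Schr\"odinger manner. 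Since the data are smooth and $\varepsilon$-independent, elliptic regularity yields $|w_{i}^{(j)}(x)|\leq C_{i,j}(k)$ and hence the asserted bound $|u_{S}^{(j)}(x)|\lesssim C_{j}(k)$.

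Third, I would correct the two unsatisfied conditions $u'(0)=u'(1)=0$ by boundary layer expansions. Near $x=0$ I pass to the stretched variable $\xi=x/\varepsilon$ and seek $u_{BL}^{L}=\sum_{i\geq 0}\varepsilon^{i}v_{i}(\xi)$ with $v_{i}$ decaying at infinity; the leading layer equation $\partial_{\xi}^{4}v_{0}-a(0)\partial_{\xi}^{2}v_{0}=0$ has the single admissible decaying mode $e^{-\sqrt{a(0)}\,\xi}$, which produces the stated factor $e^{-\beta x/\varepsilon}$ for any $\beta<\sqrt{a_{0}}$ (the strict inequality absorbing the polynomial-in-$\xi$ factors generated at higher order), and the chain rule $\partial_{x}^{j}=\varepsilon^{-j}\partial_{\xi}^{j}$ yields the $\varepsilon^{1-j}$ prefactor. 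An identical construction at $x=1$ in the variable $(1-x)/\varepsilon$ gives $u_{BL}^{R}$, and the layer amplitudes at each order are fixed by matching so that $u_{S}+u_{BL}^{L}+u_{BL}^{R}$ satisfies all four boundary conditions to the truncation order.

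Finally, I would bound the remainder $r:=u-u_{S}-u_{BL}^{L}-u_{BL}^{R}\in H_{0}^{2}(I)$, which by construction satisfies the same equation with a right-hand side of size $O(\varepsilon^{M+1})$ and exponentially small boundary mismatches; the coercivity of $\mathcal{B}_{\varepsilon}$, together with a stability estimate for the shifted operator, then forces $\|r\|_{E,I}$ and, after a bootstrap, its pointwise derivatives to be negligible and absorbable into $u_{S}$ once $M$ is chosen large relative to $j$. The main obstacle I anticipate is the solvability step: because the reduced operator carries $\lambda_{k,0}$ as an eigenvalue, every term of the outer expansion lives in a range of codimension one, and one must verify that the induced compatibility conditions are met exactly by the correct choice of the $\lambda_{k,i}$ and by the contributions transmitted from the layer expansion. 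Controlling this coupling uniformly in $\varepsilon$ -- rather than the otherwise routine derivative estimates -- is where the real work lies, and it is precisely here that the simplicity assumption on the eigenvalue is essential.
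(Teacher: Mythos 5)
Your proposal reconstructs from scratch what the paper simply cites: the paper's entire proof consists of quoting Moser's decomposition $u=G_{0}+\varepsilon G_{1}\exp\bigl(-\tfrac{1}{\varepsilon}\int_{0}^{x}a^{1/2}(s)\,ds\bigr)+\varepsilon G_{2}\exp\bigl(-\tfrac{1}{\varepsilon}\int_{x}^{1}a^{1/2}(s)\,ds\bigr)$, with $G_{0},G_{1},G_{2}$ admitting asymptotic expansions in $\varepsilon$, and reading the stated bounds off that formula. Your outer/inner expansion, the Rayleigh--Schr\"odinger determination of the $\lambda_{k,i}$ via Fredholm solvability of the singular reduced operator, and the decaying mode $e^{-\sqrt{a(0)}\,\xi}$ (yielding any $\beta<\sqrt{a_{0}}$ after absorbing polynomial factors) are precisely the machinery behind the cited result, so in substance you are re-proving Moser's theorem rather than taking a different route. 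One bookkeeping point: the layer ansatz must begin at order $\varepsilon$, i.e.\ $u_{BL}^{L}=\sum_{i\geq 1}\varepsilon^{i}v_{i}(\xi)$, because $w_{0}$ already matches the Dirichlet data and only the $O(1)$ mismatch in $u'$ needs cancelling; the chain rule alone gives $\varepsilon^{-j}$, and the advertised $\varepsilon^{1-j}$ requires this $O(\varepsilon)$ amplitude, which your ansatz $\sum_{i\geq 0}\varepsilon^{i}v_{i}$ does not make explicit.

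The genuine gap is the remainder step. You propose to bound $r=u-u_{S}-u_{BL}^{L}-u_{BL}^{R}$ using the coercivity of $\mathcal{B}_{\varepsilon}$ together with ``a stability estimate for the shifted operator,'' but the relevant form is $\mathcal{B}_{\varepsilon}(\cdot,\cdot)-\lambda_{k}\left\langle \cdot,\cdot\right\rangle _{I}$, which vanishes on $u_{k}$ and is therefore not coercive, so no direct energy estimate for $r$ exists. Closing the argument requires splitting $r$ into its component along $u_{k}$ (fixed by the normalization and choice of sign) and its component in the orthogonal complement, and inverting the shifted operator on that complement; the inversion is uniform in $\varepsilon$ only if the gap between $\lambda_{k}(\varepsilon)$ and the rest of the spectrum is bounded below uniformly in $\varepsilon$, which in turn rests on the simplicity of $\lambda_{k}(0)$ for the reduced problem and on $\lambda_{j}(\varepsilon)\rightarrow\lambda_{j}(0)$. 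You correctly identify this as ``where the real work lies'' but do not supply it, and the further passage from an energy bound on $r$ to the pointwise bounds on $r^{(j)}$ for every $j$ (needed to absorb $r$ into $u_{S}$) is an additional nontrivial bootstrap. As it stands, the proposal is a sound outline of the construction underlying the cited decomposition, with its hardest step left open.
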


\begin{proof} In \cite{moser} we find the following decomposition for the
eigenfunctions: 
\begin{equation*}
u(x)=G_{0}(x,\varepsilon )+\varepsilon G_{1}(x,\varepsilon )\exp \left( -%
\frac{1}{\varepsilon }\int_{0}^{x}a^{1/2}(s)ds\right) +\varepsilon
G_{2}(x,\varepsilon )\exp \left( -\frac{1}{\varepsilon }%
\int_{x}^{1}a^{1/2}(s)ds\right)
\end{equation*}%
where $G_{i},i=0,1,2$ have asymptotic power series expansions with respect
to $\varepsilon $ (we omitted the dependence on $\lambda $.) The
decomposition and desired bounds follow from the above expression. \end{proof}

\begin{remark}
The dependence of the constants in the previous theorem, on $j$ and $k$ is
not explicitly known. Thus, if $C_{j}(k)\rightarrow \infty $ as $%
j\rightarrow \infty $ and/or $k\rightarrow \infty $, our results
deteriorate. Moreover, as our numerical results suggest, the computation of
higher modes becomes more difficult as $k$ is increased. This is in line
with classical results for non singularly perturbed eigenvalue problems,
see, e.g. \cite{Boffi}.
\end{remark}

\section{Discretization by an exponentially graded $h$-FEM\label{mesh}}

The discrete version of (\ref{BuvFv}) reads: Find $u_{k}^{h}\in V_{h}\subset
H_{0}^{2}\left( I\right) ,\lambda _{k}^{h}\in \mathbb{C}$ such that 
\begin{equation}
{\mathcal{B}}_{\varepsilon }\left( u_{k}^{h},v\right) =\lambda
_{k}^{h}\left\langle u_{k}^{h},v\right\rangle _{I}\;\;\forall \;v\in
V_{h}\subset H_{0}^{2}\left( I\right) ,  \label{BuvFvN}
\end{equation}%
with the finite dimensional subspace $V_{h}$ defined as follows: let 
\begin{equation*}
\Delta =\left\{ 0=x_{0}<x_{1}<...<x_{N}=1\right\}
\end{equation*}
be an arbitrary partition of $I$ and set 
\begin{equation*}
I_{j}=\left( x_{j-1},x_{j}\right) ,\quad h_{j}=x_{j}-x_{j-1},\quad j=1,...,N.
\end{equation*}%
With $\mathbb{P}_{p}(\alpha ,\beta )$ the space of polynomials of degree
less than or equal to $p\geq 2N+1$ on the interval $(\alpha ,\beta )$, we
define the subspace $V_{h}\subset H_{0}^{2}(I)$ as 
\begin{equation}
V_{h}=\left\{ u\in H_{0}^{2}\left( I\right) :u|_{I_{j}}\in \mathbb{P}%
_{p}\left( I_{j}\right) ,j=1,...,N\right\} .  \label{Vh}
\end{equation}%
We note that the space $V_{h}$ consists of the classical (piecewise) Hermite
polynomials (see, e.g., \cite{AI}), hence we quote the following relevant
results.

\begin{defn}
\cite{AI} \label{defn_hermite} Let $\{x_{i}\}_{i=0}^{N}$ be an arbitrary
partition of the interval $[a,b]$ and suppose that for a sufficiently smooth
function $f(x),x\in \lbrack a,b]$, the values 
\begin{equation*}
f(x_{i})=y_{i}\in \mathbb{R}\;,\;f^{\prime }(x_{i})=y_{i}^{\prime }\in 
\mathbb{R}\;,\;i=0,1,...,N
\end{equation*}%
are given. Then there exists a unique polynomial $f^{I}\in \mathbb{P}%
_{2N+1}\left( a,b\right) $, called the {\emph{Hermite}} interpolant of $f$,
given by 
\begin{equation*}
f^{I}(x)=\sum_{i=0}^{N}\left( y_{i}H_{0,i}(x)+y_{i}^{\prime
}H_{1,i}(x)\right) ,
\end{equation*}%
where, with $L_{i}(x)$ the Lagrange polynomial of degree $N$ associated with
node $x_{i}$, 
\begin{equation*}
H_{0,i}(x)=[1-2(x-x_{i})\frac{dL_{i}}{dx}(x_{i})]L_{i}^{2}(x)\;,%
\;H_{1,i}(x)=(x-x_{i})L_{i}^{2}(x).
\end{equation*}
\end{defn}

\begin{thm}
\cite[Thm 1.12]{AI}\label{AI} Let $v\in C^{2n+2}\left( [a,b]\right) $ and
let $\Delta =\left\{ x_{i}\right\} _{i=0}^{N}$ be a mesh on $[a,b]$ with
maximum meshsize $h$ and with $N$ a multiple of $n$. If $v^{I}$ is the
piecewise Hermite interpolant of $v$ from Definition \ref{defn_hermite},
having degree at most $2n+1$ on each subinterval $[x_{i-1},x_{i}],i=1,...,N$%
, then 
\begin{equation*}
\left\Vert v^{(\ell )}-(v^{I})^{(\ell )}\right\Vert _{\infty ,I}\lesssim
h^{2n+2-\ell }\left\Vert v^{(2n+2)}\right\Vert _{\infty ,I}\;,\ell
=0,1,...,2n+1.
\end{equation*}
\end{thm}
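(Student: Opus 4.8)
The plan is to reduce the global, piecewise estimate to a single reference block and then invoke a Peano-kernel (equivalently, Bramble--Hilbert) argument there. First I would localize: since the interpolant $v^I$ is defined blockwise and both sides of the inequality are measured in the $L^\infty$ norm, it suffices to bound $\|v^{(\ell)}-(v^I)^{(\ell)}\|_{\infty,T}$ on each block $T=[x_{jn},x_{(j+1)n}]$ of $n$ consecutive subintervals and then take the maximum over the $N/n$ blocks. Writing $H=|T|\le nh$ for the block length and $c=x_{jn}$ for its left endpoint, I would pass to the reference interval $\widehat T=[0,1]$ through the affine map $x=c+H\widehat x$, setting $\widehat v(\widehat x)=v(c+H\widehat x)$. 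Because Hermite interpolation (Definition~\ref{defn_hermite}) commutes with affine reparametrization, the interpolant transforms as $\widehat{v^I}=(\widehat v\,)^I$, and the chain rule gives the scaling identities $\|v^{(\ell)}-(v^I)^{(\ell)}\|_{\infty,T}=H^{-\ell}\,\|\widehat v^{(\ell)}-(\widehat v\,)^{I\,(\ell)}\|_{\infty,\widehat T}$ and $\|\widehat v^{(2n+2)}\|_{\infty,\widehat T}=H^{2n+2}\,\|v^{(2n+2)}\|_{\infty,T}$.

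It then remains to prove the reference estimate $\|\widehat v^{(\ell)}-(\widehat v\,)^{I\,(\ell)}\|_{\infty,\widehat T}\lesssim \|\widehat v^{(2n+2)}\|_{\infty,\widehat T}$ for $\ell=0,\dots,2n+1$; combining it with the two identities and $H\le nh$ yields $\|v^{(\ell)}-(v^I)^{(\ell)}\|_{\infty,T}\lesssim H^{2n+2-\ell}\|v^{(2n+2)}\|_{\infty,T}\lesssim h^{2n+2-\ell}\|v^{(2n+2)}\|_{\infty,I}$, which is the claim (the exponent $2n+2-\ell$ is nonnegative, so replacing $H$ by $h$ only enlarges the bound). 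To obtain the reference estimate I would fix $\widehat x\in\widehat T$ and $\ell$ and consider the linear functional $L_{\widehat x,\ell}(\widehat v)=\widehat v^{(\ell)}(\widehat x)-(\widehat v\,)^{I\,(\ell)}(\widehat x)$. Since the Hermite interpolant reproduces every polynomial of degree $\le 2n+1$, $L_{\widehat x,\ell}$ annihilates $\mathbb{P}_{2n+1}(0,1)$. Substituting the Taylor expansion of $\widehat v$ about $0$ with integral remainder and using this annihilation collapses the polynomial part, leaving the Peano representation $L_{\widehat x,\ell}(\widehat v)=\int_0^1 K_\ell(\widehat x,t)\,\widehat v^{(2n+2)}(t)\,dt$ with kernel $K_\ell(\widehat x,t)=\frac{1}{(2n+1)!}L_{\widehat x,\ell}\big((\cdot-t)_+^{2n+1}\big)$. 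Hence $|L_{\widehat x,\ell}(\widehat v)|\le \big(\sup_{\widehat x}\int_0^1|K_\ell(\widehat x,t)|\,dt\big)\,\|\widehat v^{(2n+2)}\|_{\infty,\widehat T}$, which is exactly the reference estimate.

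The main obstacle is showing that the Peano-kernel constant $\sup_{\widehat x}\int_0^1|K_\ell|\,dt$ is bounded independently of the mesh, because $K_\ell$ depends on the interpolation operator and hence on the reference node positions $\widehat t_i=(x_{jn+i}-c)/H$, which move as the mesh is refined. For $n=1$---the case $p=3$ relevant to the numerical experiments---each block is a single subinterval, the two reference nodes are pinned at $0$ and $1$, the operator is fixed, and the constant is a genuine absolute constant; this already gives the local estimate $\|v^{(\ell)}-(v^I)^{(\ell)}\|_{\infty,I_j}\lesssim h_j^{4-\ell}\|v^{(4)}\|_{\infty,I_j}$ that the convergence analysis ultimately uses. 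For $n\ge2$ the interior reference nodes vary, and uniform control of $K_\ell$ requires that the $n$ subintervals within each block not degenerate relative to one another (local quasi-uniformity); under such a condition the admissible node configurations lie in a compact set away from coalescence and the kernel bound is uniform. An alternative to the Peano argument, giving the cleanest constant for $\ell=0$, is the classical Hermite error identity $\widehat v(\widehat x)-(\widehat v\,)^I(\widehat x)=\frac{\widehat v^{(2n+2)}(\xi)}{(2n+2)!}\prod_{i=0}^{n}(\widehat x-\widehat t_i)^2$, whose product is bounded by $1$ on $[0,1]$ uniformly in the nodes; the difficulty is genuinely confined to the derivative cases $\ell\ge1$.
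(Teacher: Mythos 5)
First, note that the paper does not prove this statement at all: Theorem~\ref{AI} is quoted verbatim from \cite[Thm 1.12]{AI} and used as a black box, so there is no internal proof to compare yours against. Judged on its own terms, your reduction is the standard one and its first two stages are sound: the blockwise localization, the affine scaling identities, and the Peano representation (valid because the Hermite operator reproduces $\mathbb{P}_{2n+1}$ on each block) are all correct. The problem is the one you flag yourself: for $n\ge 2$ your bound on $\sup_{\widehat x}\int_0^1|K_\ell(\widehat x,t)|\,dt$ is only obtained under a local quasi-uniformity hypothesis keeping the interior reference nodes away from coalescence, and no such hypothesis appears in the theorem (the mesh $\Delta$ is arbitrary). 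As written, your argument therefore proves a weaker statement than the one claimed, although it does fully cover $n=1$, i.e.\ the cubic case $p=3$ used in the paper's computations.

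The gap is avoidable, and the classical argument closes it with a constant depending only on $n$ and no assumption whatsoever on the node distribution, making the Peano kernel and the compactness discussion unnecessary. On a block $T$ of length $H$ with nodes $t_0<\dots<t_n$, the error $e=v-v^I$ lies in $C^{2n+2}(T)$ and has a double zero at each $t_i$. Hence $e'$ has at least $2n+1$ distinct zeros in $T$ ($n+1$ at the nodes and $n$ more from Rolle's theorem between consecutive nodes), and inductively $e^{(\ell)}$ has at least $2n+2-\ell$ distinct zeros $z_1<\dots<z_{2n+2-\ell}$ in $T$ for $\ell\ge 1$. Writing $e^{(\ell)}$ in Newton form with respect to these zeros, the interpolating part vanishes and the divided-difference mean value theorem together with $(v^I)^{(2n+2)}\equiv 0$ on $T$ gives
\[
\bigl|e^{(\ell)}(x)\bigr|=\Bigl|e^{(\ell)}[z_1,\dots,z_{2n+2-\ell},x]\prod_{j}(x-z_j)\Bigr|\le \frac{H^{2n+2-\ell}}{(2n+2-\ell)!}\,\bigl\Vert v^{(2n+2)}\bigr\Vert_{\infty,T},
\]
while $\ell=0$ is handled by the confluent remainder formula you already quote. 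Combined with your observation $H\le nh$, this yields the theorem in full generality.
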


We mention in passing that the classical theory of eigenvalue problems (see,
e.g., \cite{SF}) gives, in the case when $\varepsilon $ is \emph{fixed} and
piecewise cubic polynomials are used on a uniform mesh with meshsize $h$, 
\begin{equation*}
\lambda _{k}\leq \lambda _{k}^{h}\leq \lambda _{k}+C(\varepsilon )\lambda
_{k}^{2}h^{4},
\end{equation*}%
with $h\leq h_{0}(\varepsilon )$ for some $h_{0}$. \ Numerical experiments,
however, indicate that this estimate does not hold uniformly with respect to 
$\varepsilon $. This is due to the boundary layer components that are
present in the (first derivative of the) eigenfunctions and in view of
Theorem \ref{thm_reg}, the `challenge' lies in approximating the
one-dimensional boundary layer function 
\begin{equation}
e^{-\beta x/\varepsilon },\beta \in \mathbb{R}^{+},x\in \lbrack
0,1],\varepsilon \in (0,1].  \label{typical}
\end{equation}

As mentioned before, there are several layer adapted meshes in the
literature, perhaps the most widely known being the Shishkin or S-type
meshes. In this article we choose to use the exponentially graded (eXp) mesh
from \cite{x0} -- therein the mesh appears for the first time in the
literature. (See also \cite{FX} for a connection between the eXp mesh and
S-type meshes.) To define the mesh, let the mesh points be chosen as
follows: with $N>4$ a multiple of $4$, we split the interval $[0,1]$ into 
\begin{equation*}
\lbrack 0,x_{N/4-1}]\text{ , }[x_{N/4-1},x_{3N/4+1}]\text{ , }[x_{3N/4+1},1]
\end{equation*}%
and on $[x_{N/4-1},x_{3N/4+1}]$ we choose an equidistant mesh with $N/4+1$
elements. For the other two subintervals the mesh will be \emph{%
exponentially graded} with $N/4-1$ elements. In particular, the mesh is
given by a continuous, monotonically increasing, piecewise continuously
differentiable, generating function $\phi $ with $\phi (0)=0$. Then, the
nodal points in our mesh are given by 
\begin{equation}
x_{j}=\left\{ 
\begin{array}{ccc}
\frac{\varepsilon }{\beta }(p+1)\phi \left( \frac{j}{N}\right) & , & 
j=0,1,...,N/4-1 \\ 
x_{N/4-1}+\left( \frac{x_{3N/4}-x_{N/4-1}}{N/2+2}\right) \left( j-\frac{N}{4}%
+1\right) & , & j=N/4,...,3N/4 \\ 
1-\frac{\varepsilon }{\beta }(p+1)\phi \left( \frac{N-j}{N}\right) & , & 
j=3N/4+1,...,N%
\end{array}%
\right.  \label{nodes}
\end{equation}%
with 
\begin{equation}
\phi (t)=-\ln \left[ 1-4C_{p,\varepsilon }t\right] ,\;t\in \lbrack
0,1/4-1/N],  \label{phi}
\end{equation}%
where 
\begin{equation}
C_{p,\varepsilon }=1-\exp \left( -\frac{\beta }{(p+1)\varepsilon }\right)
\in \mathbb{R}^{+}.  \label{Cpe}
\end{equation}

An example of this mesh is shown in Figure \ref{expmesh2}.

\setlength{\unitlength}{2mm} 
\begin{figure}[h]
\begin{center}
\begin{picture}(64,10)

\put(0,5){\line(1,0){64}}
{\linethickness{0.5mm}
\put(0,4){\line(0,1){2}}
}
\put(-0.2,2.2){$0$}
\put(0.6,4.5){\line(0,1){1}}
\put(2,4.5){\line(0,1){1}}
\put(5,4.5){\line(0,1){1}}
\put(9,4.5){\line(0,1){1}}
\put(11,3){$\cdots$}
{\linethickness{0.5mm}
\put(18,4.5){\line(0,1){1}}
}
\put(16,3){$x_{N/4-1}$}
\put(32,3){$\cdots$}
\multiput(22,4.5)(4.2,0){6}{\line(0,1){1}}
{\linethickness{0.5mm}
\put(48,4.5){\line(0,1){1}}
}
\put(45,3){$x_{3N/4+1}$}
\put(57,4.5){\line(0,1){1}}
\put(60,4.5){\line(0,1){1}}
\put(62,4.5){\line(0,1){1}}
\put(63.25,4.5){\line(0,1){1}}
\put(53,3){$\cdots$}
{\linethickness{0.5mm}
\put(64,4){\line(0,1){2}}
}
\put(63.75,2.2){$1$}
\end{picture}
\end{center}
\caption{Example of the exponential mesh.}
\label{expmesh2}
\end{figure}
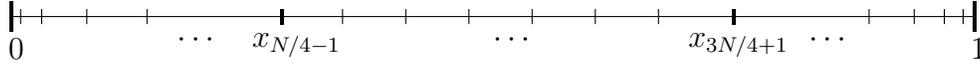

{We also define the function $\psi $ by $\phi =-\ln \psi $, which gives $%
\psi (t)=1-2C_{p,\varepsilon }t$ as well as $\psi ^{\prime
}(t)=-2C_{p,\varepsilon }\in \mathbb{R}^{-}$. } The meshwidth $h_{j}$ in the
intervals $[0,x_{N/4-1}],[x_{3N/4+1},1]$ satisfies \cite{CX}, 
\begin{equation}
h_{j}\leq \frac{\varepsilon }{\beta }(p+1)N^{-1}\max_{I_{j}}\phi ^{\prime
}\leq \frac{\varepsilon }{\beta }(p+1)e^{\frac{x_{j}}{(p+1)\varepsilon }%
},j=1,...,N/4-1,3N/4+1,...,N.  \label{hj0}
\end{equation}%
Moreover, under the assumption $\frac{\varepsilon }{\beta }(p+1)\ln (N-4)<1$%
, which means that $\varepsilon $ is small and we are in the singularly
perturbed case, it was shown in \cite{CX} that 
\begin{equation}
e^{-\beta x_{N/4-1}/\varepsilon }+e^{-(1-\beta x_{3N/4+1})/\varepsilon
}\lesssim N^{-(p+1)}.  \label{tp}
\end{equation}%
The interpolation result below (Lemma \ref{lemma_interp}) was established in 
\cite{X} under the the (stronger, but common) assumption 
\begin{equation}
\varepsilon <N^{-1}.  \label{Ass1}
\end{equation}%
(This is needed in order to be able to approximate the smooth part of the
solution at the correct rate.) Note that under this assumption, one has $%
h_{j}\lesssim N^{-1}$ for all $I_{j}\subset I$ and the problem \emph{is}
singularly perturbed.

\section{Error estimates}

\label{approx}

We begin by noting that in our setting, Theorem \ref{AI} gives%
\begin{equation}
\left\Vert v^{(k)}-(v^{I})^{(k)}\right\Vert _{\infty ,I_{j}}\lesssim
h_{j}^{p+1-k}\left\Vert v^{(p+1)}\right\Vert _{\infty
,I}\;,k=0,1,...,p,j=1,...,N.  \label{Hinterp}
\end{equation}%
Using the above and the definition of the exponential mesh the following
lemma was established in \cite{X}.

\begin{lem}
\label{lemma_interp}Let $u_{BL}$ be given by (\ref{typical}) and let $%
u_{BL}^{I}\in V_{h}$ be its interpolant as in Theorem \ref{AI} \ based on
the mesh $\Delta =\{x_{j}\}_{j=1}^{N}$ with nodes (\ref{nodes}) obtained
with the mesh generating function $\phi $ given by (\ref{phi}). Then 
\begin{equation}
\left\Vert \left( u_{BL}-u_{BL}^{I}\right) ^{(\ell )}\right\Vert _{\infty
,I}\lesssim \varepsilon ^{1-k}N^{-(p+1-\ell )}\text{ },\ell =0,1,...,p,
\label{Linf}
\end{equation}%
and 
\begin{equation}
\left\vert u_{BL}-u_{BL}^{I}\right\vert _{2,I}\lesssim \varepsilon
^{-1/2}N^{-p+1}\text{.}  \label{H2}
\end{equation}
\end{lem}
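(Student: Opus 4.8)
The plan is to exploit the three-part structure of the mesh \eqref{nodes} and to treat the left layer region $[0,x_{N/4-1}]$, the coarse middle region $[x_{N/4-1},x_{3N/4+1}]$, and the right layer region $[x_{3N/4+1},1]$ separately, the last being entirely symmetric to the first. On each element I would start from the pointwise Hermite bound \eqref{Hinterp} with $v=u_{BL}$, using the regularity estimate $|u_{BL}^{(j)}(x)|\lesssim \varepsilon^{1-j}e^{-\beta x/\varepsilon}$ from Theorem \ref{thm_reg} (so that $|u_{BL}^{(p+1)}|\lesssim \varepsilon^{-p}e^{-\beta x/\varepsilon}$) and the monotonicity of the layer to replace the $I_j$-supremum by its value at the left endpoint $x_{j-1}$.

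The heart of the argument is the left layer region, where the decisive algebraic fact is that the generating function \eqref{phi} is tuned to the layer: since $x_j=\tfrac{\varepsilon}{\beta}(p+1)\phi(j/N)$ one has the identity $e^{-\beta x_j/\varepsilon}=e^{-(p+1)\phi(j/N)}$, while \eqref{hj0} gives $h_j\lesssim \tfrac{\varepsilon}{\beta}(p+1)N^{-1}\phi'(\xi_j)\sim (\varepsilon/N)\,e^{\phi(t_{j-1})}$ with $t_{j-1}=(j-1)/N$. Substituting these into \eqref{Hinterp} I expect the factors $\varepsilon$, $N$ and the exponential $e^{\pm\phi}$ to balance exactly, producing $\|(u_{BL}-u_{BL}^I)^{(\ell)}\|_{\infty,I_j}\lesssim \varepsilon^{1-\ell}N^{-(p+1-\ell)}e^{-\ell\phi(t_{j-1})}$; since $e^{-\ell\phi(t_{j-1})}\le 1$ this is \eqref{Linf}. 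For \eqref{H2} I would take $\ell=2$, pass from the $L^\infty$ to the $L^2$ norm on each element via $\|f\|_{0,I_j}^2\le h_j\|f\|_{\infty,I_j}^2$, and insert $h_j\sim(\varepsilon/N)\,e^{\phi(t_{j-1})}$ once more, arriving at the elementwise estimate $\|(u_{BL}-u_{BL}^I)''\|_{0,I_j}^2\lesssim \varepsilon^{-1}N^{-(2p-1)}e^{-3\phi(t_{j-1})}$. Summing over the layer elements then reduces to controlling $\sum_j e^{-3\phi(t_{j-1})}$, which by \eqref{phi} equals $\sum_j(1-4C_{p,\varepsilon}(j-1)/N)^3$ and is a Riemann sum for $N\int_0^{1/4}(1-4C_{p,\varepsilon}t)^3\,dt\lesssim N$; taking the square root yields the claimed $\varepsilon^{-1/2}N^{-(p-1)}$.

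On the coarse and right regions the layer is negligible. Here I would not use the high-derivative interpolation bound naively, because $\|u_{BL}^{(p+1)}\|_{\infty}$ still carries a factor $\varepsilon^{-p}$; instead I would use the exponential smallness \eqref{tp}, $e^{-\beta x_{N/4-1}/\varepsilon}\lesssim N^{-(p+1)}$, together with the global $L^2$ interpolation estimate and $h_j\lesssim N^{-1}$, bounding $\|u_{BL}^{(p+1)}\|_{0,[x_{N/4-1},1]}\lesssim \varepsilon^{-p+1/2}e^{-\beta x_{N/4-1}/\varepsilon}$. The assumption \eqref{Ass1}, $\varepsilon<N^{-1}$, is exactly what converts the surviving negative power of $\varepsilon$ into a power of $N$ that is absorbed by the gain $N^{-2p}$, so that this region contributes at strictly lower order than $\varepsilon^{-1/2}N^{-(p-1)}$. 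Combining the three regional bounds gives \eqref{Linf} and \eqref{H2}.

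The step I expect to be the main obstacle is the layer-region $H^2$ sum: one has to track the competing powers of $\varepsilon$, $N$ and $e^{\phi}$ through both the replacement $\|\cdot\|_{0,I_j}^2\le h_j\|\cdot\|_{\infty,I_j}^2$ and the second substitution of \eqref{hj0}, so that precisely the exponent $e^{-3\phi(t_{j-1})}$ (rather than a less favorable one) remains, and then to recognize that the residual sum is $O(N)$ rather than larger. Getting this bookkeeping right is what produces the sharp $\varepsilon^{-1/2}$, as opposed to the $\varepsilon^{-3/2}$ that a cruder elementwise estimate would yield.
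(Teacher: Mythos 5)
First, a point of reference: the paper does not actually prove Lemma \ref{lemma_interp} --- it is quoted from \cite{X} --- so there is no in-paper argument to compare against, and your attempt has to stand on its own. Your treatment of the layer region does stand: the identity $e^{-\beta x_j/\varepsilon}=e^{-(p+1)\phi(j/N)}$ together with $h_j\lesssim \varepsilon N^{-1}e^{\phi(t_j)}$ from \eqref{hj0} (plus the elementary observation that $e^{\phi(t_j)}\lesssim e^{\phi(t_{j-1})}$ on the graded part, since $1-4C_{p,\varepsilon}t_j\geq 4C_{p,\varepsilon}/N$ there) turns \eqref{Hinterp} into exactly $\varepsilon^{1-\ell}N^{-(p+1-\ell)}e^{-\ell\phi(t_{j-1})}$, and your $H^2$ bookkeeping, for which even the crude bound $\sum_j e^{-3\phi(t_{j-1})}\leq N/4$ suffices, delivers the sharp $\varepsilon^{-1/2}N^{-(p-1)}$. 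This is the heart of the lemma and you have it right. (One reading issue you should make explicit: \eqref{typical} literally defines $u_{BL}=e^{-\beta x/\varepsilon}$, whose derivatives scale like $\varepsilon^{-j}$; the bounds \eqref{Linf}--\eqref{H2}, and your computation, tacitly use the scaling $\varepsilon^{1-j}e^{-\beta x/\varepsilon}$ of Theorem \ref{thm_reg}, which is how the lemma is applied in Lemma \ref{lemma_interp2}, so your convention is the intended one --- but it is a convention, not a consequence of \eqref{typical}.)

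The genuine gap is in the coarse/far region $[x_{N/4-1},1]$. You propose to keep the interpolation-remainder bound, obtaining roughly $h^{p-1}\|u_{BL}^{(p+1)}\|_{0,[x_{N/4-1},1]}\lesssim \varepsilon^{1/2-p}N^{-2p}$, and to ``convert the surviving negative power of $\varepsilon$ into a power of $N$'' via \eqref{Ass1}. That step fails: $\varepsilon<N^{-1}$ gives $\varepsilon^{1/2-p}>N^{p-1/2}$, i.e.\ the inequality runs the wrong way, and no negative power of $\varepsilon$ can be bounded above by a power of $N$ under this assumption (for $p=3$ and $\varepsilon=N^{-100}$ your bound is $N^{244}$). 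Nor does \eqref{tp} rescue it, since $e^{-\beta x_{N/4-1}/\varepsilon}$ is only of size $N^{-(p+1)}$, not $\varepsilon$-exponentially small. The standard repair is to abandon the Taylor-remainder estimate away from the layer and use the triangle inequality $\|(u_{BL}-u_{BL}^I)^{(\ell)}\|\leq\|u_{BL}^{(\ell)}\|+\|(u_{BL}^I)^{(\ell)}\|$: the first term is controlled directly by the pointwise decay and \eqref{tp}, and the second by the smallness of the Hermite data $u_{BL}(x_i),\,u_{BL}'(x_i)$ at the nodes of $[x_{N/4-1},1]$ combined with the explicit basis representation of Definition \ref{defn_hermite} and an inverse estimate on each element, e.g.\ $\|(u_{BL}^I)^{(\ell)}\|_{\infty,I_j}\lesssim h_j^{-\ell}\max_i|u_{BL}(x_i)|+h_j^{1-\ell}\max_i|u_{BL}'(x_i)|$ with $h_j\lesssim N^{-1}$. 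These contributions are then dominated by the layer-region terms, which closes the proof.
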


The above lemma allows us to prove the following

\begin{lem}
\label{lemma_interp2}Let $u$ be the solution of (\ref{BuvFv}) and let $%
u^{I}\in V_{h}$ be its interpolant as in Theorem \ref{AI} \ based on the
mesh $\Delta =\{x_{j}\}_{j=1}^{N}$ with nodes (\ref{nodes}) obtained with
the mesh generating function $\phi $ given by (\ref{phi}). Then 
\begin{equation*}
\left\Vert \left( u-u^{I}\right) ^{(\ell )}\right\Vert _{\infty ,I}\lesssim
\varepsilon ^{1-\ell }N^{-(p+1-\ell )}\text{ },\ell =0,1,...,p,
\end{equation*}%
and 
\begin{equation*}
\left\vert u-u^{I}\right\vert _{2,I}\lesssim \varepsilon ^{-1/2}N^{-p+1},
\end{equation*}%
hence%
\begin{equation*}
\left\Vert u-u^{I}\right\Vert _{E,I}\lesssim N^{-p+1}.
\end{equation*}
\end{lem}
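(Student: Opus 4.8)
The plan is to exploit the linearity of the Hermite interpolation operator together with the solution decomposition $u=u_{S}+u_{BL}^{L}+u_{BL}^{R}$ furnished by Theorem \ref{thm_reg}. Since interpolation commutes with this splitting, $u^{I}=u_{S}^{I}+(u_{BL}^{L})^{I}+(u_{BL}^{R})^{I}$, and the error decomposes as
\begin{equation*}
u-u^{I}=\left( u_{S}-u_{S}^{I}\right) +\left( u_{BL}^{L}-(u_{BL}^{L})^{I}\right) +\left( u_{BL}^{R}-(u_{BL}^{R})^{I}\right) .
\end{equation*}
It then suffices to estimate each piece separately and recombine by the triangle inequality. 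The two layer terms are governed by Lemma \ref{lemma_interp}, which delivers exactly the bounds (\ref{Linf}) and (\ref{H2}); here one uses that the right layer reduces to the left by the reflection $x\mapsto 1-x$ (the mesh (\ref{nodes}) being symmetric), and that the smooth, $\varepsilon$-uniform modulating factors appearing in the layer terms of Theorem \ref{thm_reg} do not alter the rates, by the product rule. The only genuine work is thus the smooth part and the final assembly in the energy norm.

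For the smooth component I would invoke the per-element estimate (\ref{Hinterp}) of Theorem \ref{AI}. Since Theorem \ref{thm_reg} guarantees $|u_{S}^{(j)}(x)|\lesssim C_{j}(k)$ with constants independent of $\varepsilon $, and since under assumption (\ref{Ass1}) the observation following (\ref{Ass1}) gives $h_{j}\lesssim N^{-1}$ on \emph{every} subinterval $I_{j}$, one obtains
\begin{equation*}
\left\Vert \left( u_{S}-u_{S}^{I}\right) ^{(\ell )}\right\Vert _{\infty ,I_{j}}\lesssim h_{j}^{p+1-\ell }\left\Vert u_{S}^{(p+1)}\right\Vert _{\infty ,I}\lesssim N^{-(p+1-\ell )},\quad \ell =0,1,\dots ,p .
\end{equation*}
Summing the squares over the subintervals and using $\sum_{j}h_{j}=|I|=1$ yields the $H^{2}$-seminorm bound $\left\vert u_{S}-u_{S}^{I}\right\vert _{2,I}\lesssim N^{-(p-1)}$. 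Note that these smooth-part bounds carry no negative power of $\varepsilon $.

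Combining the three pieces by the triangle inequality now produces the pointwise estimates: for $\ell \geq 1$ the layer contribution $\varepsilon ^{1-\ell }N^{-(p+1-\ell )}$ dominates, since $\varepsilon ^{1-\ell }\geq 1$ makes it no smaller than the $\varepsilon $-free smooth term, while for $\ell =0$ both contributions are of order $N^{-(p+1)}$; this gives the first displayed inequality. For the $H^{2}$ seminorm, because $\varepsilon ^{-1/2}\geq 1$ the layer bound (\ref{H2}) absorbs the smooth bound $N^{-(p-1)}$, yielding $\left\vert u-u^{I}\right\vert _{2,I}\lesssim \varepsilon ^{-1/2}N^{-p+1}$, the second claim.

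Finally I would assemble the energy norm from $\left\Vert \cdot \right\Vert _{E,I}^{2}=\varepsilon ^{2}\left\vert \cdot \right\vert _{2,I}^{2}+\left\Vert \cdot \right\Vert _{1,I}^{2}$. Integrating the pointwise $\ell =0,1$ bounds over $I$ controls the $H^{1}$ part by $\left\Vert u-u^{I}\right\Vert _{1,I}\lesssim N^{-p}$, which is of higher order than needed. The decisive term is the weighted seminorm, where by the second claim
\begin{equation*}
\varepsilon ^{2}\left\vert u-u^{I}\right\vert _{2,I}^{2}\lesssim \varepsilon ^{2}\cdot \varepsilon ^{-1}N^{-2(p-1)}=\varepsilon N^{-2(p-1)}\leq N^{-2(p-1)} .
\end{equation*}
Here lies the main (indeed the only) subtle point: the factor $\varepsilon ^{-1/2}$ that blows up in the raw $H^{2}$ interpolation of the boundary layer is exactly compensated by the weight $\varepsilon ^{2}$, leaving the harmless power $\varepsilon ^{1}\leq 1$ and hence an $\varepsilon $-uniform bound. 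Adding the two contributions and taking square roots gives $\left\Vert u-u^{I}\right\Vert _{E,I}\lesssim N^{-p+1}$. The remaining care is purely bookkeeping: confirming that $h_{j}\lesssim N^{-1}$ genuinely holds in the graded layer region, which is guaranteed by (\ref{Ass1}) through (\ref{hj0}), so that the smooth-part rate is not degraded there.
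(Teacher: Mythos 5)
Your proposal is correct and follows essentially the same route as the paper's own proof: decompose $u=u_{S}+u_{BL}^{L}+u_{BL}^{R}$ via Theorem \ref{thm_reg}, split the interpolant by linearity, bound the layer parts with Lemma \ref{lemma_interp} and the smooth part with the standard estimate (\ref{Hinterp}) under assumption (\ref{Ass1}), then assemble the energy norm. You in fact supply more detail than the paper does, in particular the explicit cancellation $\varepsilon^{2}\cdot\varepsilon^{-1}=\varepsilon\leq 1$ in the weighted $H^{2}$ term, which the paper leaves implicit.
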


\begin{proof}

We use the decomposition of Theorem \ref{thm_reg}, $%
u=u_{S}+u_{BL}^{L}+u_{BL}^{R},$ and denote the interpolant by $%
u^{I}=u_{S}^{I}+\left( u_{BL}^{L}\right) ^{I}+\left( u_{BL}^{R}\right) ^{I},$
with the obvious notation. Then, 
\begin{equation*}
\left\Vert \left( u-u^{I}\right) ^{(\ell )}\right\Vert _{\infty ,I}\lesssim
\left\Vert \left( u_{S}-u_{S}^{I}\right) ^{(\ell )}\right\Vert _{\infty
,I}+\left\Vert \left( u_{BL}^{L}-\left( u_{BL}^{L}\right) ^{I}\right)
^{(\ell )}\right\Vert _{\infty ,I}+\left\Vert \left( u_{BL}^{R}-\left(
u_{BL}^{R}\right) ^{I}\right) ^{(\ell )}\right\Vert _{\infty ,I},
\end{equation*}%
with the last two terms being handled by Lemma \ref{lemma_interp} and the
first one by standard techniques and (\ref{Ass1}). The other estimates are
shown in a similar fashion. \end{proof}

Returning to the eigenvalue problem, set%
\begin{equation*}
E_{k}=span\left\{ u_{k}\right\} \;,\;E_{k}^{h}=span\left\{ u_{k}^{h}\right\}
\;,\;E^{h}=\oplus _{i=1}^{k}E_{i}^{h}.
\end{equation*}%
Then, the discrete min-max condition says (see \cite[eq. (7.6)]{Boffi})%
\begin{equation}
\lambda _{k}^{h}=\min_{E^{h}\in V_{h}^{(k)}}\max_{v\in E^{h}}\frac{{\mathcal{%
B}}_{\varepsilon }(v,v)}{\left\langle v,v\right\rangle _{I}},  \label{minmax}
\end{equation}%
where $V_{h}^{(k)}$ denotes the set of all subspaces of $V_{h}$ with
dimension $k$. We choose%
\begin{equation}
E^{h}=\Pi _{h}V^{(k)}  \label{Eh}
\end{equation}%
in (\ref{minmax}), where%
\begin{equation*}
V^{(k)}=\oplus _{i=1}^{k}E_{i}
\end{equation*}%
and $\Pi _{h}:V\rightarrow V_{h}$ is the Ritz projection, defined by 
\begin{equation}
{\mathcal{B}}_{\varepsilon }\left( u-\Pi _{h}u,v\right) =0\;\forall \;v\in
V_{h}.  \label{proj}
\end{equation}%
We may do so since, for $h$ sufficiently small, the bound%
\begin{equation}
\left\Vert \Pi _{h}v\right\Vert _{E,I}\geq \left\Vert v\right\Vert
_{E,I}-\left\Vert v-\Pi _{h}v\right\Vert _{E,I}\;\forall \;v\in V
\label{bound}
\end{equation}%
ensures that the dimension of $E^{h}$ is equal to $k$. In particular, if we
take $h$ such that%
\begin{equation*}
\left\Vert v-\Pi _{h}v\right\Vert _{E,I}\leq \frac{1}{2}\left\Vert
v\right\Vert _{E,I}\;\forall \;v\in V^{(k)},
\end{equation*}%
then $\Pi _{h}$ is injective from $V^{(k)}$ to $E^{h}$. (The smallness of $h$
depends on $k$). See \cite{Boffi} for more details.

As in \cite{SunStynes}, we have%
\begin{eqnarray*}
\left\Vert u-\Pi _{h}u\right\Vert _{E,I}^{2} &=&{\mathcal{B}}_{\varepsilon
}\left( u-\Pi _{h}u,u-\Pi _{h}u\right) ={\mathcal{B}}_{\varepsilon }\left(
u-\Pi _{h}u,u-\Pi _{h}u-v\right) \\
&=&{\mathcal{B}}_{\varepsilon }\left( u-\Pi _{h}u,u-\widetilde{v}\right) \\
&\lesssim &\left\Vert u-\Pi _{h}u\right\Vert _{E,I}\left\Vert u-\widetilde{v}%
\right\Vert _{E,I}
\end{eqnarray*}%
with $\widetilde{v}=\Pi _{h}u-v\in V_{h}$ arbitrary. Hence, with $u^{I}$ the 
$p^{th}$ degree interpolant of $u$ on the exponential mesh, we have by Lemma %
\ref{lemma_interp2}%
\begin{equation}
\left\Vert u-\Pi _{h}u\right\Vert _{E,I}\lesssim \left\Vert u-\widetilde{v}%
\right\Vert _{E,I}\lesssim \left\Vert u-u^{I}\right\Vert _{E,I}\lesssim
N^{-p+1}\lesssim h^{p-1}.  \label{projection}
\end{equation}%
The above will be utilized in establishing the following result for the
approximation of the eigenvalues.

\begin{thm}
\label{thm_evals}Let $\lambda _{k},u_{k}$ be the solution of (\ref{BuvFv})
and $\lambda _{k}^{h},u_{k}^{h}$ the solution of (\ref{BuvFvN}) on the eXp
mesh. Assuming $\left\langle u_{k},u_{k}\right\rangle _{I}=1=\left\langle
u_{k}^{h},u_{k}^{h}\right\rangle _{I}$ as well as $\left\langle
u_{k},u_{k}^{h}\right\rangle _{I}>0$, we have for all $h\leq h_{0},$ with $%
h_{0}$ independent of $\varepsilon $, the bound 
\begin{equation*}
\lambda _{k}\leq \lambda _{k}^{h}\lesssim \bar{C}(k)\lambda _{k}\left(
1+h^{2p-2}\right) ,
\end{equation*}%
with $\bar{C}(k)$ independent of $\varepsilon $.
\end{thm}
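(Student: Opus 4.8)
The plan is to prove the two inequalities separately. The lower bound $\lambda_k\le\lambda_k^h$ is the standard monotonicity of a conforming Galerkin method: since $V_h\subset H_0^2(I)$, every $k$-dimensional subspace admissible in the discrete min-max (\ref{minmax}) is also admissible in the continuous min-max for $\lambda_k$, so minimising over the smaller collection $V_h^{(k)}$ can only raise the value, giving $\lambda_k^h\ge\lambda_k$. The real work is the upper bound, where I would insert the specific trial space $E^h=\Pi_h V^{(k)}$ from (\ref{Eh}) into (\ref{minmax}). Since $\Pi_h$ is injective on $V^{(k)}$ for $h$ small (as noted before the theorem), this reduces the task to estimating
\[
\lambda_k^h\le\max_{0\ne w\in V^{(k)}}\frac{\mathcal{B}_\varepsilon(\Pi_h w,\Pi_h w)}{\langle\Pi_h w,\Pi_h w\rangle_I}
\]
uniformly over $w=\sum_{i=1}^k c_i u_i$.

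For the numerator I would use the energy (Galerkin) orthogonality (\ref{proj}): writing $e_w=w-\Pi_h w$, one gets $\mathcal{B}_\varepsilon(\Pi_h w,\Pi_h w)=\mathcal{B}_\varepsilon(w,w)-\mathcal{B}_\varepsilon(e_w,e_w)$. Because the eigenfunctions are $L^2(I)$-orthonormal (simple eigenvalues together with self-adjointness) and $\mathcal{B}_\varepsilon(u_i,u_j)=\lambda_i\langle u_i,u_j\rangle_I=\lambda_i\delta_{ij}$, this gives $\mathcal{B}_\varepsilon(w,w)=\sum_i c_i^2\lambda_i\le\lambda_k\langle w,w\rangle_I$, so the numerator is at most $\lambda_k\langle w,w\rangle_I$.

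The crux is the denominator, $\langle\Pi_h w,\Pi_h w\rangle_I=\langle w,w\rangle_I-2\langle w,e_w\rangle_I+\|e_w\|_{0,I}^2$, where I must show the correction terms are $O(h^{2p-2})$ rather than merely $O(h^{p-1})$; this squared rate is precisely what produces the double rate in the eigenvalue error and is the main obstacle. The mechanism is the identity $\langle u_i,e_j\rangle_I=\lambda_i^{-1}\mathcal{B}_\varepsilon(e_i,e_j)$, obtained by testing the eigenvalue equation (\ref{BuvFv}) for $u_i$ against $e_j\in H_0^2(I)$ and then using (\ref{proj}) to replace $\mathcal{B}_\varepsilon(u_i,e_j)$ by $\mathcal{B}_\varepsilon(e_i,e_j)$ (the $\Pi_h u_i$ contribution is annihilated by Galerkin orthogonality and the symmetry of $\mathcal{B}_\varepsilon$). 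Cauchy--Schwarz in $\mathcal{B}_\varepsilon$ together with $\|e_i\|_{E,I}\lesssim N^{-p+1}\lesssim h^{p-1}$ from (\ref{projection}) and Lemma \ref{lemma_interp2} then yields $|\langle u_i,e_j\rangle_I|\lesssim h^{2p-2}$; summing over $i,j\le k$ and absorbing the eigenfunction-dependent regularity constants into $\bar C(k)$ gives $|\langle w,e_w\rangle_I|+\|e_w\|_{0,I}^2\lesssim\bar C(k)\,h^{2p-2}\langle w,w\rangle_I$.

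Finally I would combine the two estimates. For $h\le h_0(k)$ the denominator is bounded below by $\langle w,w\rangle_I(1-c\,\bar C(k)h^{2p-2})>0$, so the Rayleigh quotient is at most $\lambda_k/(1-c\,\bar C(k)h^{2p-2})$; expanding $1/(1-x)\le 1+2x$ for small $x$ delivers $\lambda_k^h\lesssim\lambda_k(1+\bar C(k)h^{2p-2})$, which is the claim after renaming the constant. I would stress that the threshold $h_0$ and the constant $\bar C(k)$ depend on $k$ through the regularity constants of Theorem \ref{thm_reg}, but \emph{not} on $\varepsilon$, since every interpolation and projection bound invoked is $\varepsilon$-uniform.
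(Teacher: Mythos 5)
Your proposal is correct and follows essentially the same route as the paper: insert the Ritz-projected trial space $E^h=\Pi_h V^{(k)}$ into the discrete min--max, bound the numerator by $\mathcal{B}_\varepsilon(v,v)\le \lambda_k\langle v,v\rangle_I$ via Galerkin orthogonality, and obtain the doubled rate in the denominator from the identity $\langle u_i,\cdot\rangle_I=\lambda_i^{-1}\mathcal{B}_\varepsilon(u_i,\cdot)$ combined with (\ref{projection}). Your treatment of the denominator --- estimating $\langle w, w-\Pi_h w\rangle_I$ directly and showing it is $O(h^{2p-2})\langle w,w\rangle_I$ --- is in fact a cleaner execution of the step the paper carries out on $\langle v,\Pi_h v\rangle_I$, and it leads to the same conclusion.
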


\begin{proof} The proof follows \cite[Sec. 2.8]{Boffi} and \cite[Ch. 6]{SF}.
Let $k$ be fixed. Using (\ref{Eh}) in (\ref{minmax}) gives%
\begin{equation*}
\lambda _{k}^{h}\leq \max_{w\in E^{h}}\frac{{\mathcal{B}}_{\varepsilon }(w,w)%
}{\left\langle w,w\right\rangle _{I}}=\max_{v\in V^{(k)}}\frac{{\mathcal{B}}%
_{\varepsilon }(\Pi _{h}v,\Pi _{h}v)}{\left\langle \Pi _{h}v,\Pi
_{h}v\right\rangle _{I}}.
\end{equation*}%
Note that%
\begin{equation*}
{\mathcal{B}}_{\varepsilon }\left( \Pi _{h}v,\Pi _{h}v\right) ={\mathcal{B}}%
_{\varepsilon }\left( v,v\right) +2{\mathcal{B}}_{\varepsilon }\left( \Pi
_{h}v,\Pi _{h}v-v\right) -{\mathcal{B}}_{\varepsilon }\left( \Pi _{h}v-v,\Pi
_{h}v-v\right)
\end{equation*}%
with the last term positive and the second to last zero. Thus,%
\begin{equation*}
{\mathcal{B}}_{\varepsilon }\left( \Pi _{h}v,\Pi _{h}v\right) \leq {\mathcal{%
B}}_{\varepsilon }\left( v,v\right) .
\end{equation*}%
Writing%
\begin{equation*}
v=\sum_{i=1}^{k}c_{i}u_{i}\ ,c_{i}\in \mathbb{R},
\end{equation*}%
we have%
\begin{eqnarray*}
{\mathcal{B}}_{\varepsilon }\left( \Pi _{h}v,\Pi _{h}v\right) &\leq &{%
\mathcal{B}}_{\varepsilon }\left(
\sum_{i=1}^{k}c_{i}u_{i},\sum_{j=1}^{k}c_{j}u_{j}\right)
=\sum_{i=1}^{k}c_{i}^{2}{\mathcal{B}}_{\varepsilon }\left(
u_{i},u_{i}\right) =\sum_{i=1}^{k}c_{i}^{2}\lambda _{i}\left\langle
u_{i},u_{i}\right\rangle _{I} \\
&\leq &\sum_{i=1}^{k}c_{i}^{2}\lambda _{i}\leq C(k)\lambda _{k}
\end{eqnarray*}%
and thus,%
\begin{equation*}
\lambda _{k}^{h}\leq C(k)\lambda _{k}\max_{v\in V^{(k)}}\frac{1}{\left\Vert
\Pi _{h}v\right\Vert _{0,I}^{2}}.
\end{equation*}%
Note that%
\begin{equation*}
\left\Vert v\right\Vert _{0,I}^{2}=\left\langle v,v\right\rangle
_{I}=\left\langle
\sum_{i=1}^{k}c_{i}u_{i},\sum_{j=1}^{k}c_{j}u_{j}\right\rangle
_{I}=\sum_{i=1}^{k}c_{i}^{2}\left\langle u_{i},u_{i}\right\rangle
_{I}=\sum_{i=1}^{k}c_{i}^{2}=C(k).
\end{equation*}%
Moreover,%
\begin{equation*}
\left\Vert v-\Pi _{h}v\right\Vert _{0,I}^{2}=\left\langle v-\Pi _{h}v,v-\Pi
_{h}v\right\rangle _{I}=\left\Vert v\right\Vert _{0,I}^{2}-2\langle v,\Pi
_{h}v\rangle _{I}+\left\Vert \Pi _{h}v\right\Vert _{0,I}^{2},
\end{equation*}%
hence,%
\begin{equation*}
\left\Vert \Pi _{h}v\right\Vert _{0,I}^{2}=\left\Vert v-\Pi _{h}v\right\Vert
_{0,I}^{2}-C(k)+2\langle v,\Pi _{h}v\rangle _{I}.
\end{equation*}%
The term $\left\Vert v-\Pi _{h}v\right\Vert _{0,I}^{2}$ may be handled by
Lemma \ref{lemma_interp2}. For the term $\langle v,\Pi _{h}v\rangle _{I}$,
we have%
\begin{eqnarray*}
\left\vert \langle v,\Pi _{h}v\rangle _{I}\right\vert &=&\left\vert
\sum_{i=1}^{k}c_{i}\left\langle u_{i},\Pi _{h}v\right\rangle _{I}\right\vert
\leq \sum_{i=1}^{k}\left\vert c_{i}\right\vert \left\vert \left\langle
u_{i},\Pi _{h}v\right\rangle _{I}\right\vert \leq \sum_{i=1}^{k}\left\vert
c_{i}\right\vert \left\vert \lambda _{i}^{-1}{\mathcal{B}}_{\varepsilon
}\left( u_{i},\Pi _{h}v\right) \right\vert \\
&\leq &\sum_{i=1}^{k}\left\vert c_{i}\right\vert \left\vert \lambda
_{i}^{-1}\right\vert \left\vert {\mathcal{B}}_{\varepsilon }\left( u_{i}-\Pi
_{h}u_{i},v-\Pi _{h}v\right) \right\vert \leq \sum_{i=1}^{k}\left\vert
c_{i}\right\vert \left\vert \lambda _{i}^{-1}\right\vert \left\Vert
u_{i}-\Pi _{h}u_{i}\right\Vert _{E,I}\left\Vert v-\Pi _{h}v\right\Vert _{E,I}
\\
&\leq &\sum_{i=1}^{k}\left\vert \frac{c_{i}}{\lambda _{i}}\right\vert
h^{2p-2}\lesssim \left[ \sum_{i=1}^{k}\frac{c_{i}^{2}}{\lambda _{i}^{2}}%
\right] ^{1/2}h^{2p-2}=\tilde{C}(k)h^{2p-2},
\end{eqnarray*}%
where Galerkin orthogonality and the coercivity of the bilinear form were
used. Since%
\begin{equation*}
\left\Vert \Pi _{h}v\right\Vert _{0,I}^{2}\geq \max_{v\in V^{(k)}}\left\vert
2\langle v,\Pi _{h}v\rangle _{I}+\left\Vert v-\Pi _{h}v\right\Vert
_{0,I}^{2}\right\vert -C(k).
\end{equation*}%
we obtain%
\begin{equation*}
\left\Vert \Pi _{h}v\right\Vert _{0,I}^{2}\gtrsim \hat{C}(k)\left(
h^{2p-2}-1\right) ,
\end{equation*}%
\bigskip with $\hat{C}(k)=\min \left\{ 1,C(k),\tilde{C}(k)\right\} $. This
gives%
\begin{equation*}
\lambda _{k}^{h}\leq C(k)\lambda _{k}\frac{1}{\hat{C}(k)\left(
h^{2p-2}-1\right) }\lesssim \bar{C}(k)\lambda _{k}\left( 1+2h^{2p-2}\right) ,
\end{equation*}%
as desired. \end{proof}

For the approximation of the eigenfunctions, we have the following result,
under the assumption that all eigenvalues are distinct.

\begin{thm}
\label{thm_evecs}Let $\lambda _{k},u_{k}$ be the solution of (\ref{BuvFv})
and $\lambda _{k}^{h},u_{k}^{h}$ the solution of (\ref{BuvFvN}) on the eXp
mesh. Assume that $\left\langle u_{k},u_{k}\right\rangle _{I}=1=\left\langle
u_{k}^{h},u_{k}^{h}\right\rangle _{I}$ , $\left\langle
u_{k},u_{k}^{h}\right\rangle _{I}>0$ and that all eigenvalues are distinct.
Then, 
\begin{equation*}
\Vert u_{k}-u_{k}^{h}\Vert _{E,I}\lesssim C(k)h^{p-1},
\end{equation*}%
with $C(k)\in \mathbb{R}$ independent of $\varepsilon ,u$ and $p$.
\end{thm}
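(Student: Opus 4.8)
My plan is to avoid comparing $u_k^h$ with $u_k$ directly and instead route through the Ritz projection $w:=\Pi_h u_k$, for which the energy error is already controlled: by (\ref{projection}) we have $\Vert u_k-w\Vert_{E,I}\lesssim h^{p-1}$, and since the energy norm dominates the $L^2$ norm also $\Vert u_k-w\Vert_{0,I}\lesssim h^{p-1}$. By the triangle inequality it then suffices to estimate $\Vert w-u_k^h\Vert_{E,I}$, and here I would exploit that both $w$ and $u_k^h$ lie in $V_h$. Let $\{u_j^h\}$ be the $L^2$-orthonormal family of discrete eigenfunctions, which is also $\mathcal{B}_{\varepsilon}$-orthogonal with $\mathcal{B}_{\varepsilon}(u_i^h,u_j^h)=\lambda_j^h\delta_{ij}$, and expand $w=\sum_j\alpha_j u_j^h$ with $\alpha_j=\langle w,u_j^h\rangle_I$. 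Diagonalising the energy form gives the clean expression $\Vert w-u_k^h\Vert_{E,I}^2=(\alpha_k-1)^2\lambda_k^h+\sum_{j\neq k}\alpha_j^2\lambda_j^h$, so the whole argument reduces to estimating the coefficients $\alpha_j$.

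For the off-diagonal coefficients I would combine the defining property (\ref{proj}) of the Ritz projection with the two eigenvalue problems. Testing (\ref{proj}) against $u_j^h\in V_h$ gives $\mathcal{B}_{\varepsilon}(w,u_j^h)=\mathcal{B}_{\varepsilon}(u_k,u_j^h)=\lambda_k\langle u_k,u_j^h\rangle_I$, while discrete orthogonality gives $\mathcal{B}_{\varepsilon}(w,u_j^h)=\lambda_j^h\alpha_j$; writing $\langle u_k,u_j^h\rangle_I=\alpha_j+\langle u_k-w,u_j^h\rangle_I$ and solving yields, for $j\neq k$, $\alpha_j=\dfrac{\lambda_k}{\lambda_j^h-\lambda_k}\langle u_k-w,u_j^h\rangle_I$. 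Since the $u_j^h$ are $L^2$-orthonormal, Bessel's inequality gives $\sum_j\langle u_k-w,u_j^h\rangle_I^2\le\Vert u_k-w\Vert_{0,I}^2\lesssim h^{2p-2}$. Provided the separation quantities $\rho_k:=\inf_{j\neq k}|\lambda_j^h-\lambda_k|$ and $M_k:=\sup_{j\neq k}\lambda_j^h/(\lambda_j^h-\lambda_k)^2$ are finite and $\varepsilon$-independent, this produces $\sum_{j\neq k}\alpha_j^2\lambda_j^h\le\lambda_k^2 M_k\Vert u_k-w\Vert_{0,I}^2\lesssim C(k)h^{2p-2}$ and likewise $\sum_{j\neq k}\alpha_j^2\lesssim C(k)h^{2p-2}$.

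It then remains to pin down the diagonal coefficient $\alpha_k$. From $\Vert w\Vert_{0,I}^2=\sum_j\alpha_j^2$, together with $\Vert w\Vert_{0,I}^2=1+O(\Vert u_k-w\Vert_{0,I})$ and the off-diagonal bound, I obtain $\alpha_k^2=1+O(h^{p-1})$, so $|\alpha_k|$ is close to $1$. The sign is fixed by the hypothesis $\langle u_k,u_k^h\rangle_I>0$: since $\alpha_k=\langle u_k,u_k^h\rangle_I+\langle w-u_k,u_k^h\rangle_I$ with the second term $O(h^{p-1})$, the value $\alpha_k$ cannot sit near $-1$, so $\alpha_k=1+O(h^{p-1})$ and $(\alpha_k-1)^2\lambda_k^h\lesssim C(k)h^{2p-2}$. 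Substituting into the diagonalised identity gives $\Vert w-u_k^h\Vert_{E,I}\lesssim C(k)h^{p-1}$, and a final triangle inequality with (\ref{projection}) yields the claim. As a cross-check, these same coefficient bounds feed the identity $\Vert u_k-u_k^h\Vert_{E,I}^2=(\lambda_k^h-\lambda_k)+\lambda_k\Vert u_k-u_k^h\Vert_{0,I}^2$, whose first term is exactly the double-rate eigenvalue estimate of Theorem \ref{thm_evals}.

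I expect the genuine difficulty to be the eigenvalue-separation step: one must guarantee that $\rho_k$ and $M_k$ stay bounded away from the degenerate regime \emph{uniformly in} $\varepsilon$. For fixed $k$ the distinctness assumption together with the convergence $\lambda_j^h\to\lambda_j$ supplies a gap for the finitely many indices near $k$, and for large $j$ the growth of $\lambda_j^h$ makes $\lambda_j^h/(\lambda_j^h-\lambda_k)^2$ small; the delicate point in the singularly perturbed setting is that these bounds, and the threshold $h_0$ below which $\Pi_h$ is injective on $V^{(k)}$, must not deteriorate as $\varepsilon\to0$. This is precisely where the $\varepsilon$-uniformity of (\ref{projection}) and of Theorem \ref{thm_evals} is used, and it is the only place where the simplicity of the eigenvalues enters essentially.
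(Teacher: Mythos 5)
Your argument is correct and reaches the stated bound, but it takes a genuinely different route from the paper. The paper first estimates the $L^{2}$ error: it introduces the auxiliary function $w_{k}^{h}=\langle \Pi _{h}u_{k},u_{k}^{h}\rangle _{I}u_{k}^{h}$, splits $u_{k}-u_{k}^{h}$ into three pieces as in (\ref{8.6}), controls the component of $\Pi _{h}u_{k}$ orthogonal to $u_{k}^{h}$ by exactly your off-diagonal identity $(\lambda _{j}^{h}-\lambda _{k})\langle \Pi _{h}u_{k},u_{j}^{h}\rangle _{I}=\lambda _{k}\langle u_{k}-\Pi _{h}u_{k},u_{j}^{h}\rangle _{I}$ together with the \emph{unweighted} separation quantity $\rho _{k}^{h}=\max_{j\neq k}|\lambda _{k}|/|\lambda _{k}-\lambda _{j}^{h}|$, and fixes the sign via $\langle \Pi _{h}u_{k},u_{k}^{h}\rangle _{I}\geq 0$, just as you do for $\alpha _{k}$. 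It then converts to the energy norm in one line through the algebraic identity ${\mathcal{B}}_{\varepsilon }(u_{k}-u_{k}^{h},u_{k}-u_{k}^{h})=\lambda _{k}\Vert u_{k}-u_{k}^{h}\Vert _{0,I}^{2}+\lambda _{k}^{h}-\lambda _{k}$, invoking Theorem \ref{thm_evals} for the last term --- precisely the identity you relegate to a cross-check. You instead expand $\Pi _{h}u_{k}$ fully in the discrete eigenbasis and diagonalize ${\mathcal{B}}_{\varepsilon }$, which yields the energy estimate directly but forces you to control the weighted quantity $M_{k}=\sup_{j\neq k}\lambda _{j}^{h}/(\lambda _{j}^{h}-\lambda _{k})^{2}$ over the entire discrete spectrum, not just $\rho _{k}^{h}$. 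What your version buys is transparency: the role of the spectral gap and of the sign normalization is completely explicit, and the energy bound does not piggyback on the eigenvalue theorem at the final step (though you still need it, via $\lambda _{k}^{h}\lesssim C(k)\lambda _{k}$ and the $\varepsilon $-uniform convergence $\lambda _{j}^{h}\to \lambda _{j}$, to bound $M_{k}$ and $(\alpha _{k}-1)^{2}\lambda _{k}^{h}$). What the paper's version buys is economy, and a sharper intermediate $L^{2}$ estimate. Two minor points: your identity for $\Vert w-u_{k}^{h}\Vert _{E,I}^{2}$ is really an identity for ${\mathcal{B}}_{\varepsilon }(w-u_{k}^{h},w-u_{k}^{h})$ and needs coercivity and continuity of the form to pass to the energy norm (harmless, since the equivalence constants are $\varepsilon $-independent); and the uniform-in-$\varepsilon $ boundedness of the separation quantities, which you rightly flag as the delicate point, is assumed implicitly in the paper as well, so you are no worse off --- indeed more candid --- on that score.
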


\begin{proof} We again follow \cite{Boffi} (see also \cite{SF}), and introduce
the following quantity:%
\begin{equation*}
\rho _{k}^{h}=\max_{k\neq j}\frac{\left\vert \lambda _{k}\right\vert }{%
|\lambda _{k}-\lambda _{j}^{h}|}.
\end{equation*}%
We also consider the $L^{2}$ projection of $\Pi _{h}u_{k}$ onto $%
span\{u_{k}^{h}\}$,%
\begin{equation}
w_{k}^{h}=\left\langle \Pi _{h}u_{k},u_{k}^{h}\right\rangle _{I}u_{k}^{h},
\label{wk}
\end{equation}%
which we use as follows:%
\begin{equation}
\Vert u_{k}-u_{k}^{h}\Vert _{0,I}\leq \Vert u_{k}-\Pi _{h}u_{k}\Vert
_{0,I}+\Vert \Pi _{h}u_{k}-w_{k}^{h}\Vert _{0,I}+\Vert
w_{k}^{h}-u_{k}^{h}\Vert _{0,I}  \label{8.6}
\end{equation}%
The first term in (\ref{8.6}) is estimated using Lemma \ref{lemma_interp2}.
To deal with the second term, note that%
\begin{equation*}
\Pi _{h}u_{k}-w_{k}^{h}=\sum_{j\neq k}\left\langle \Pi
_{h}u_{k},u_{j}^{h}\right\rangle _{I}u_{j}^{h},
\end{equation*}%
which gives 
\begin{equation}
\left\Vert \Pi _{h}u_{k}-w_{k}^{h}\right\Vert _{0,I}^{2}=\sum_{j\neq
k}\left\langle \Pi _{h}u_{k},u_{j}^{h}\right\rangle _{I}^{2}.  \label{8.7}
\end{equation}%
We have%
\begin{equation*}
\left\langle \Pi _{h}u_{k},u_{j}^{h}\right\rangle _{I}=\frac{1}{\lambda
_{j}^{h}}\mathcal{B}_{\varepsilon }\left( \Pi _{h}u_{k},u_{j}^{h}\right) =%
\frac{1}{\lambda _{j}^{h}}\mathcal{B}_{\varepsilon }\left(
u_{k},u_{j}^{h}\right) =\frac{\lambda _{k}}{\lambda _{j}^{h}}\left\langle
u_{k},u_{j}^{h}\right\rangle _{I}
\end{equation*}%
hence%
\begin{equation*}
\lambda _{j}^{h}\left\langle \Pi _{h}u_{k},u_{j}^{h}\right\rangle
_{I}=\lambda _{k}\left\langle u_{k},u_{j}^{h}\right\rangle _{I}.
\end{equation*}%
We subtract $\lambda _{k}\left\langle \Pi _{h}u_{k},u_{j}^{h}\right\rangle
_{I}$ from both sides above and we get%
\begin{equation*}
\left( \lambda _{j}^{h}-\lambda _{k}\right) \left\langle \Pi
_{h}u_{k},u_{j}^{h}\right\rangle _{I}=\lambda _{k}\left\langle u_{k}-\Pi
_{h}u_{k},u_{j}^{h}\right\rangle _{I},
\end{equation*}%
which in turn gives%
\begin{equation*}
\left\vert \left\langle \Pi _{h}u_{k},u_{j}^{h}\right\rangle _{I}\right\vert
\leq \rho _{k}^{h}\left\vert \left\langle u_{k}-\Pi
_{h}u_{k},u_{j}^{h}\right\rangle _{I}\right\vert .
\end{equation*}%
From (\ref{8.7}) we have%
\begin{equation}
\left\Vert \Pi _{h}u_{k}-w_{k}^{h}\right\Vert _{0,I}^{2}\leq \left( \rho
_{k}^{h}\right) ^{2}\sum_{j\neq k}\left\langle u_{k}-\Pi
_{h}u_{k},u_{j}^{h}\right\rangle _{I}^{2}\leq \left( \rho _{k}^{h}\right)
^{2}\left\Vert u_{k}-\Pi _{h}u_{k}\right\Vert _{0,I}^{2}.  \label{8.8}
\end{equation}%
To deal with the last term in (\ref{8.6}), we point out that if we establish%
\begin{equation}
\left\Vert u_{k}^{h}-w_{k}^{h}\right\Vert _{0,I}\leq \left\Vert
u_{k}-w_{k}^{h}\right\Vert _{0,I},  \label{8.9}
\end{equation}%
then%
\begin{equation}
\left\Vert u_{k}^{h}-w_{k}^{h}\right\Vert _{0,I}\leq \left\Vert u_{k}-\Pi
_{h}u_{k}\right\Vert _{0,I}+\left\Vert \Pi _{h}u_{k}-w_{k}^{h}\right\Vert
_{0,I},  \label{8.10}
\end{equation}%
with both terms on the right hand side above having been estimated. From (%
\ref{wk}) we have%
\begin{equation*}
u_{k}^{h}-w_{k}^{h}=u_{k}^{h}\left( 1-\left\langle \Pi
_{h}u_{k},u_{k}^{h}\right\rangle _{I}\right) .
\end{equation*}%
Also%
\begin{equation*}
\left\Vert u_{k}\right\Vert _{0,I}=\left\Vert u_{k}^{h}-w_{k}^{h}\right\Vert
_{0,I}\leq \left\Vert w_{k}^{h}\right\Vert _{0,I}\leq \left\Vert
u_{k}\right\Vert _{0,I}+\left\Vert u_{k}^{h}-w_{k}^{h}\right\Vert _{0,I}
\end{equation*}%
and since $u_{k},u_{k}^{h}$ are normalized, we have%
\begin{equation*}
1-\left\Vert u_{k}-w_{k}^{h}\right\Vert _{0,I}\leq \left\vert \left\langle
\Pi _{h}u_{k},u_{k}^{h}\right\rangle _{I}\right\vert \leq 1+\left\Vert
u_{k}-w_{k}^{h}\right\Vert _{0,I}
\end{equation*}%
from which we see that%
\begin{equation*}
\left\vert \left\vert \left\langle \Pi _{h}u_{k},u_{k}^{h}\right\rangle
_{I}\right\vert -1\right\vert \leq \left\Vert u_{k}-w_{k}^{h}\right\Vert
_{0,I}.
\end{equation*}%
By choosing 
\begin{equation*}
\left\langle \Pi _{h}u_{k},u_{k}^{h}\right\rangle _{I}\geq 0,
\end{equation*}%
we conclude that (\ref{8.9}) is satisfied. \ Utilizing (\ref{8.6}), (\ref%
{8.8}) and (\ref{8.10}) we conclude that there is an appropriate choice of
the sign of $u_{k}^{h}$ such that%
\begin{equation*}
\left\Vert u_{k}-u_{k}^{h}\right\Vert _{0,I}\leq 2\left( 1+\rho
_{k}^{h}\right) \left\Vert u_{k}-\Pi _{h}u_{k}\right\Vert _{0,I}\lesssim
C(k)h^{p+1}.
\end{equation*}%
To get the energy norm estimate we proceed as follows:%
\begin{eqnarray*}
\left\Vert u_{k}-u_{k}^{h}\right\Vert _{E,I}^{2} &\lesssim &{\mathcal{B}}%
_{\varepsilon }\left( u_{k}-u_{k}^{h},u_{k}-u_{k}^{h}\right) =\mathcal{B}%
_{\varepsilon }\left( u_{k},u_{k}\right) -2\mathcal{B}_{\varepsilon }\left(
u_{k},u_{k}^{h}\right) +\mathcal{B}_{\varepsilon }\left(
u_{k}^{h},u_{k}^{h}\right) \\
&=&\lambda _{k}-2\lambda _{k}\left\langle u_{k},u_{k}^{h}\right\rangle
_{I}+\lambda _{k}^{h}=\lambda _{k}\left[ 1-2\left\langle
u_{k},u_{k}^{h}\right\rangle _{I}\right] -\lambda _{k}+\lambda _{k}^{h} \\
&=&\lambda _{k}\Vert u_{k}-u_{k}^{h}\Vert _{0,I}^{2}+\lambda
_{k}^{h}-\lambda _{k} \\
&\lesssim &C(k)h^{2(p-1)}.
\end{eqnarray*}%
This completes the proof.
\end{proof}


\section{Numerical results}

\label{nr}

In this section we present the results of numerical computations for the
approximation of (\ref{de}) by cubic Hermite polynomials (i.e. $p=3$) in the
case when the data is chosen as $a(x)=e^{x},b(x)=x$. Since no exact solution
is available, we use a reference solution for the calculation of the errors
computed with higher accuracy. First we would like to verify the result of
Theorem \ref{thm_evals}, so in Figure \ref{fig1} we show the estimated
percentage relative error in the first two (smallest) eigenvalues, $%
100\times |\lambda _{i}-\lambda _{i}^{h}|/|\lambda _{i}|,i=1,2$ versus the
number of degrees of freedom $DOF$ (i.e. the dimension of the subspace) in a
log-log scale. We used $p=3$ and the resulting lines have slope $-4$ $%
(=-2p+2)$, just as Theorem \ref{thm_evals} predicts.

\begin{figure}[h]
\begin{center}
\includegraphics[width=0.45 \textwidth]{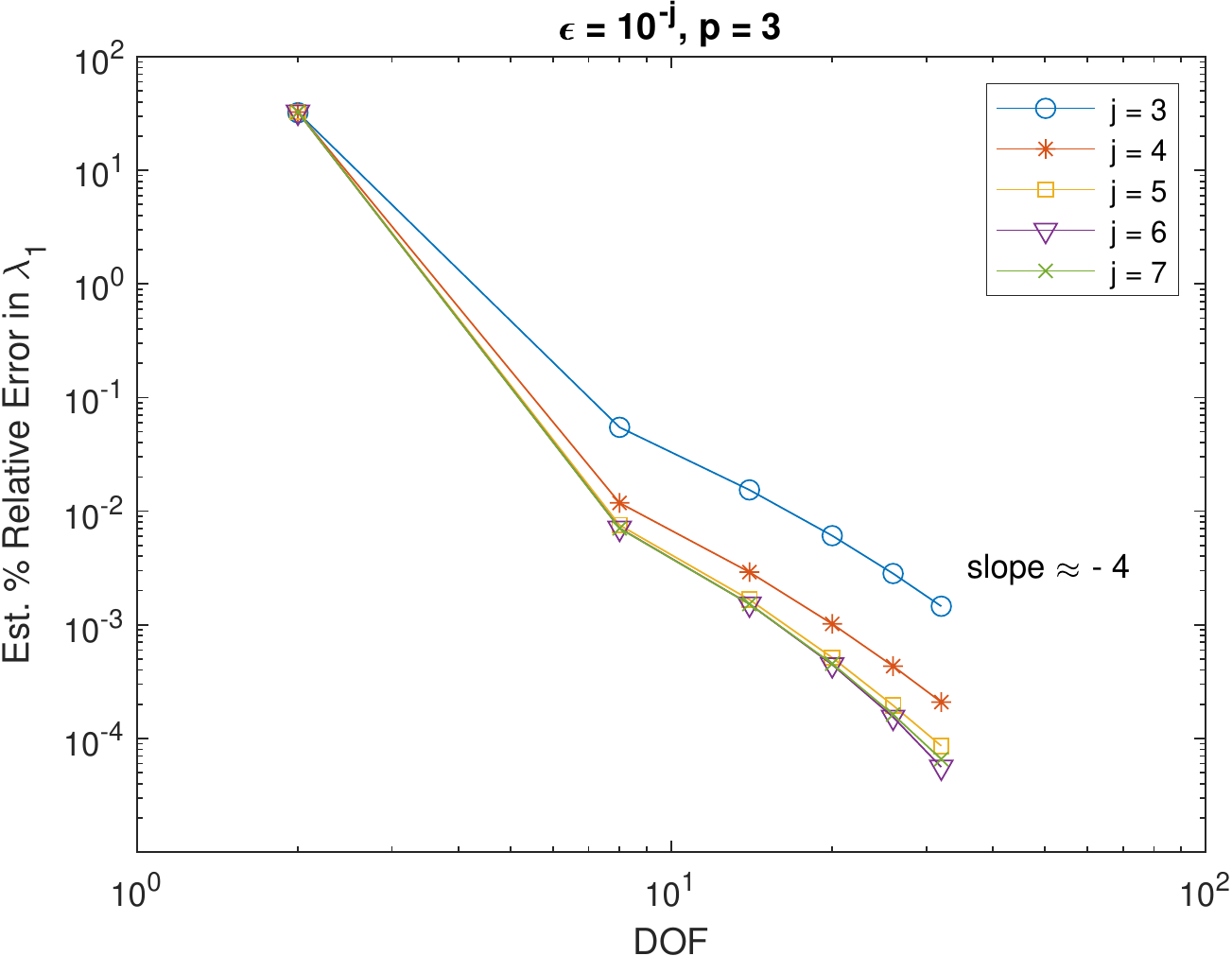} \mbox{} %
\includegraphics[width=0.43 \textwidth]{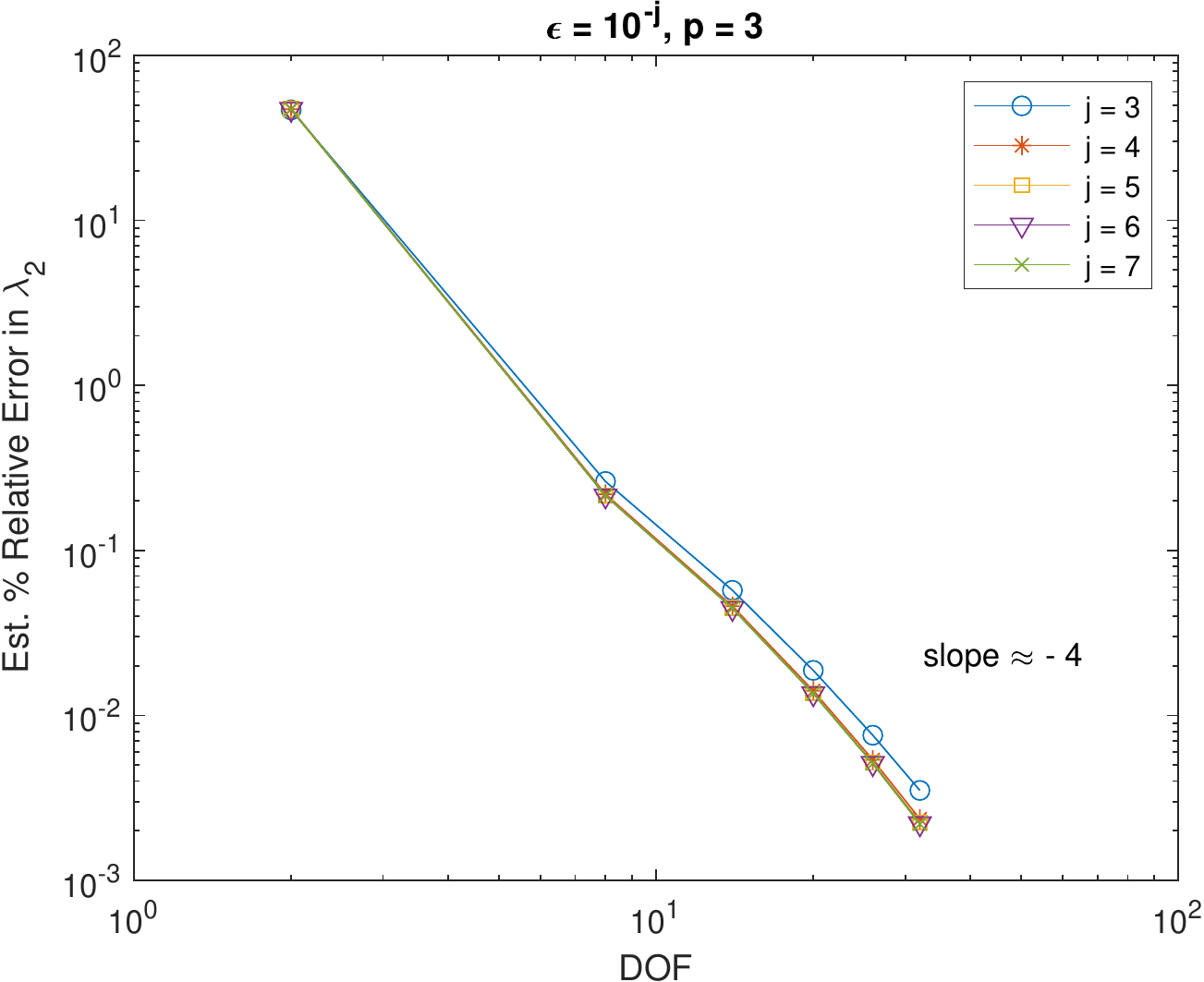}
\end{center}
\caption{Estimated convergence in $\protect\lambda_1$ (left) and $\protect%
\lambda_2$ (right).}
\label{fig1}
\end{figure}


We also show in Table \ref{table1} the computations for the first 5
eigenvalues, for $\varepsilon =10^{-6}$ (the same behavior was noticed for
other values of $\varepsilon $). We see that for larger eigenvalues the
convergence takes longer to set in, as was also observed for non-singularly
perturbed eigenvalue problems (see, e.g. \cite{Boffi}). In Figure \ref%
{compare} we illustrate this phenomenon, by comparing the convergence
between $\lambda _{1}$ and $\lambda _{5}$, for $\varepsilon =10^{-3},10^{-6}$%
. As can be seen, while $\varepsilon \rightarrow 0$ does not affect the
behavior (after all, the method is proven to be robust), there is a clear
difference between the case $\lambda _{1}$ and the case $\lambda _{5}$,
which suggests that the constants $C(k)$ in Theorem \ref{thm_evals}, grow
with $k$.


\begin{table}[h]
\begin{center}
\begin{tabular}{|c|c|c|c|c|c|c|c|}
\hline
$\lambda _{i}^{N}\backslash DOF$ & $2$ & $8$ & $14$ & $20$ & $26$ & $32$ & $%
38$ \\ \hline
$\lambda _{1}^{N}$ & $22.1093$ & $16.6812$ & $16.6803$ & $16.6801$ & $%
16.6801 $ & $16.6801$ & $16.6801$ \\ \hline
$\lambda _{2}^{N}$ & $94.9592$ & $64.6500$ & $64.5403$ & $64.5203$ & $%
64.5148 $ & $64.5130$ & $64.5122$ \\ \hline
$\lambda _{3}^{N}$ & $-$ & $145.7632$ & $144.7402$ & $144.3536$ & $144.2593$
& $144.2278$ & $144.2149$ \\ \hline
$\lambda _{4}^{N}$ & $-$ & $264.6963$ & $258.3972$ & $257.0769$ & $256.2126$
& $255.9574$ & $255.8615$ \\ \hline
$\lambda _{5}^{N}$ & $-$ & $423.2341$ & $410.7243$ & $402.9403$ & $401.7117$
& $400.1930$ & $399.6647$ \\ \hline
\end{tabular}%
\end{center}
\caption{Approximate eigenvalues for $\protect\varepsilon=10^{-6}$.}
\label{table1}
\end{table}


\begin{figure}[h]
\begin{center}
\includegraphics[width=0.55 \textwidth]{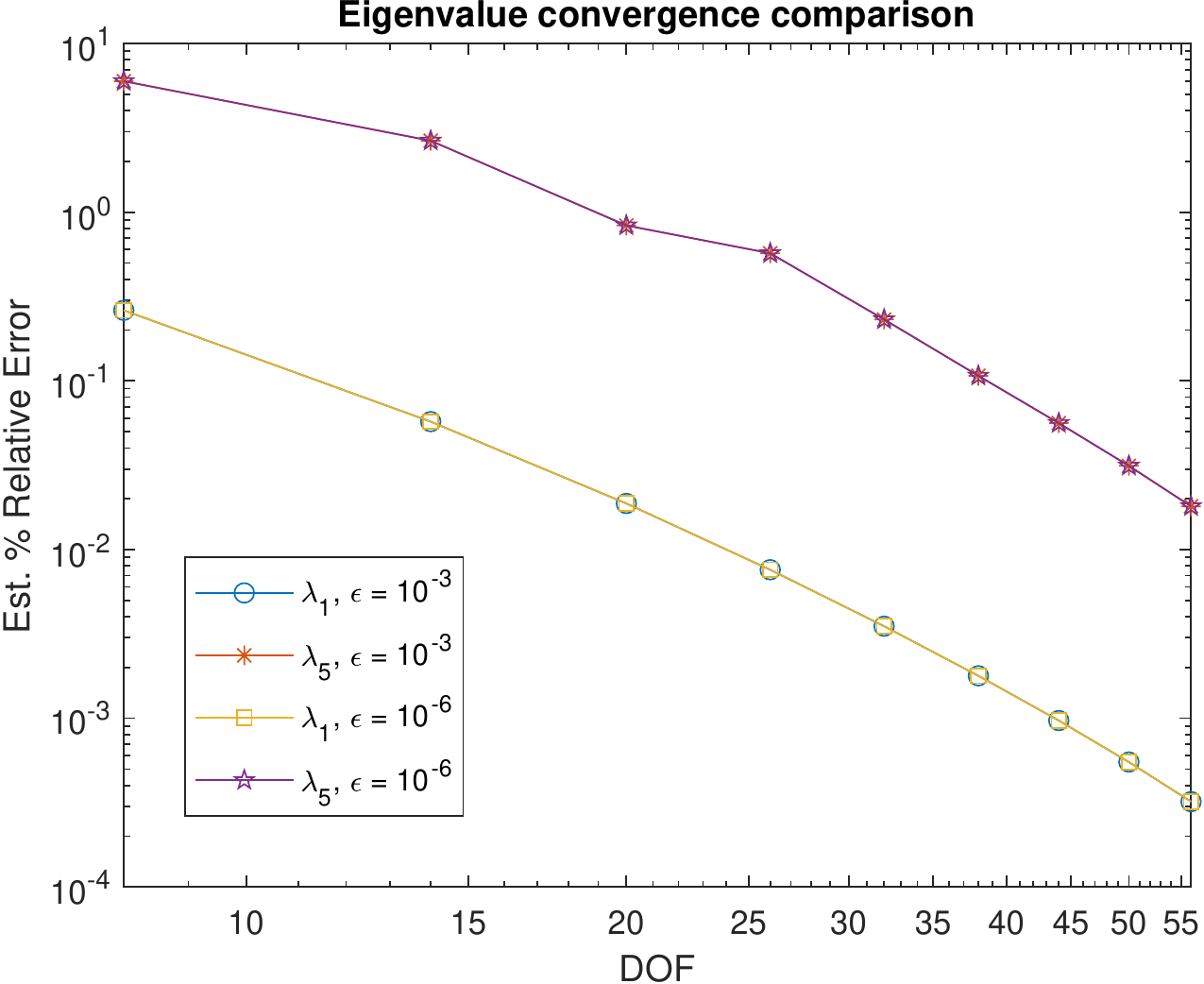}
\end{center}
\caption{Convergence comparison for $\protect\lambda_1$ and $\protect\lambda%
_5$.}
\label{compare}
\end{figure}


We now turn our attention to the eigenfunctions: Figure \ref{evcs} shows the
first two approximate eigenfunctions $u_{1}^{h},u_{2}^{h}$ and their
derivatives. The computations shown were performed for $\varepsilon =10^{-3}$
and $p=3$ with $N=32$ nodal points. We see the boundary layers being present
in the derivatives and how the proposed method is able to capture them.


\begin{figure}[h]
\begin{center}
\includegraphics[width=0.4 \textwidth]{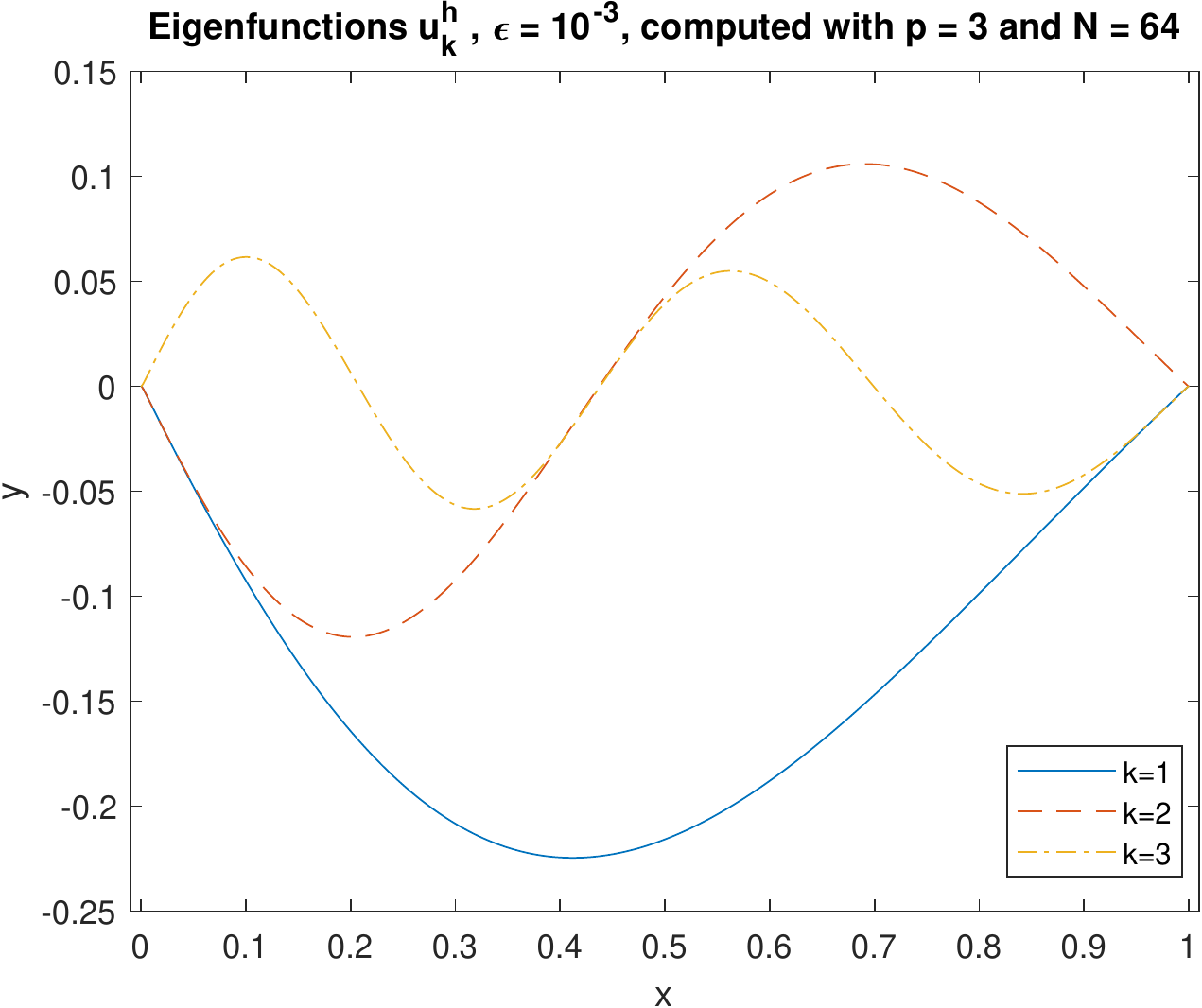} \mbox{} %
\includegraphics[width=0.4 \textwidth]{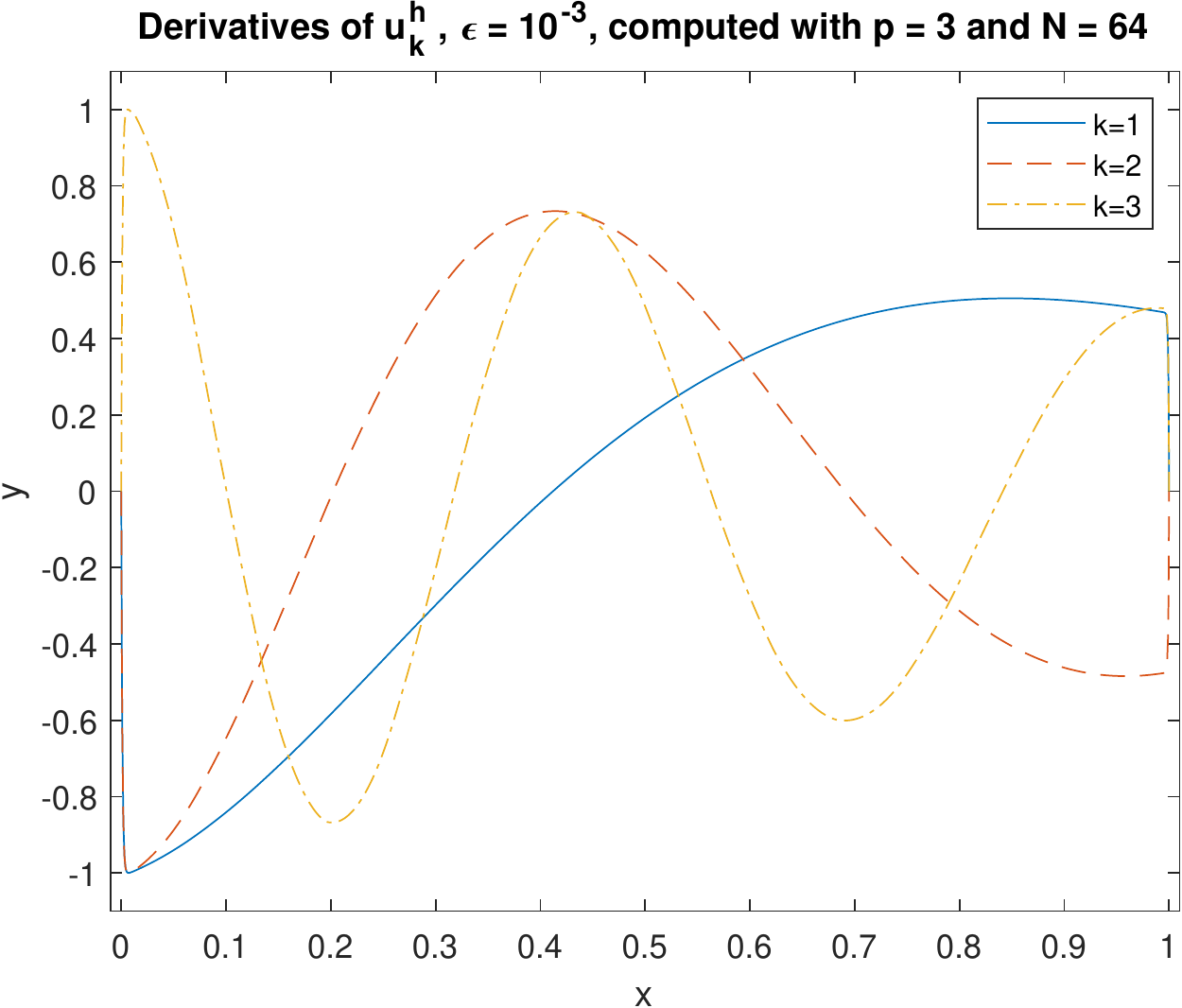}
\end{center}
\caption{Approximate eigenvectors (left) and their derivatives (right) for $%
\protect\varepsilon=10^{-3}$.}
\label{evcs}
\end{figure}


In terms of convergence, we compute the percentage relative error in the
energy norm 
\begin{equation*}
Error=100\times \frac{\left\Vert u_{i}-u_{i}^{h}\right\Vert _{E,I}}{%
\left\Vert u_{i}\right\Vert _{E,I}},
\end{equation*}%
and plot it versus the number of $DOF$, in a log-log scale. We do so for $%
i=1 $ and show the result in Figure \ref{evc1}. The slope is approximately $%
-2(=p-1)$, which verifies the prediction of Theorem \ref{thm_evecs}.


\begin{figure}[h]
\begin{center}
\includegraphics[width=0.55 \textwidth]{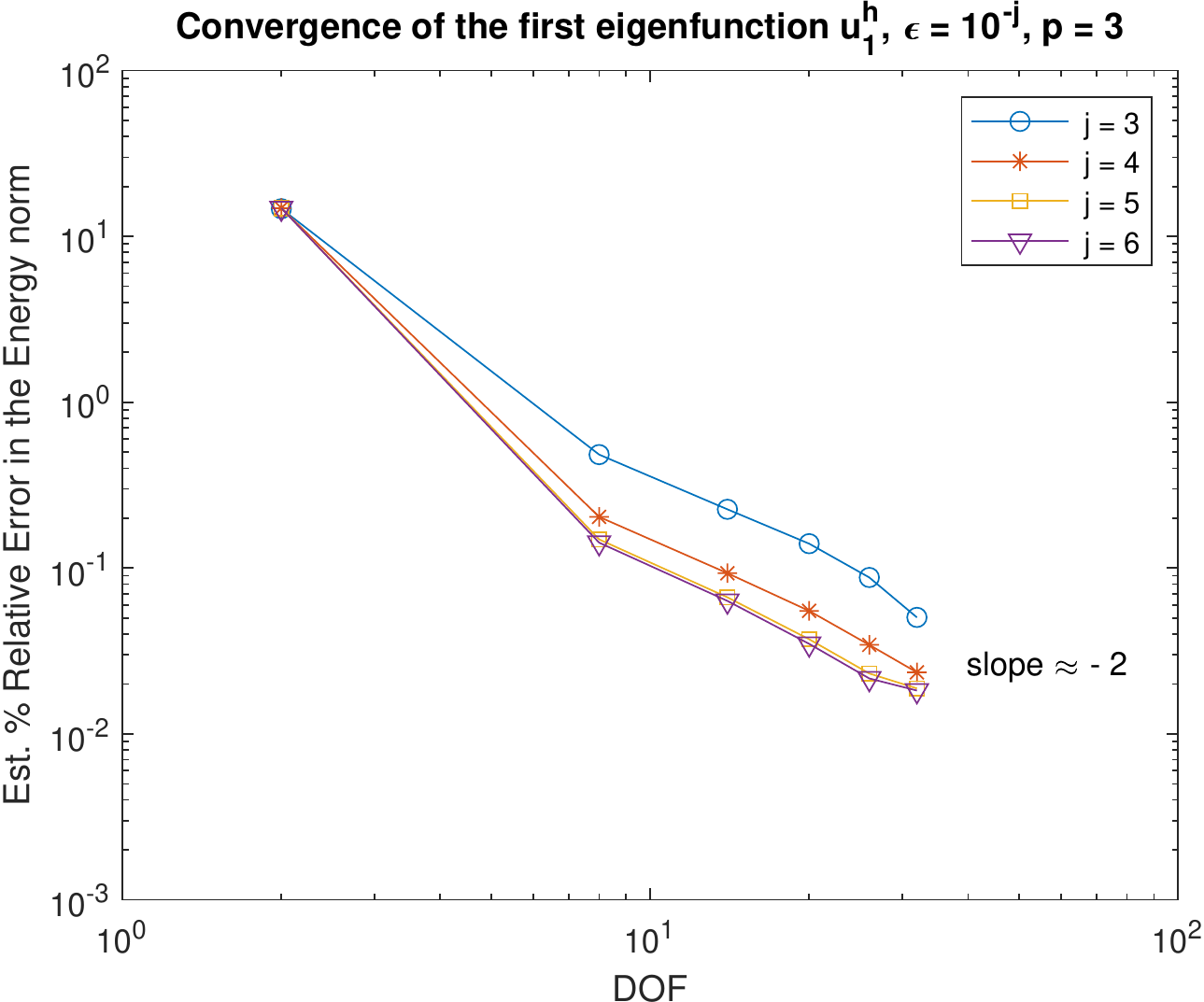}
\end{center}
\caption{Energy norm convergence for the fist eigenfunction.}
\label{evc1}
\end{figure}


We also consider the error in the first eigenfunction and its derivative
measured in a `discrete maximum norm', defined as 
\begin{equation*}
error=100\times \frac{\max_{x_{\ell }\in \lbrack 0,1]}\left\vert \left\vert
u_{1}(x_{\ell })\right\vert -\left\vert u_{1}^{h}(x_{\ell })\right\vert
\right\vert }{\left\vert u_{1}(x_{\ell })\right\vert }.
\end{equation*}%
The points $x_{\ell }\in \lbrack 0,1]$ are chosen so that we have equal
number of points in the layer regions and outside -- we used 1000 point in
each. This is not covered by our theory, so it may be seen as an extension
of our results. Figure \ref{evc2} shows the convergence rate which seems to
be robust and of order $O\left( h^{p}\right) $ for the eigenvector and $%
O\left( h^{p-1}\right) $ for its derivative.


\begin{figure}[h]
\begin{center}
\includegraphics[width=0.4 \textwidth]{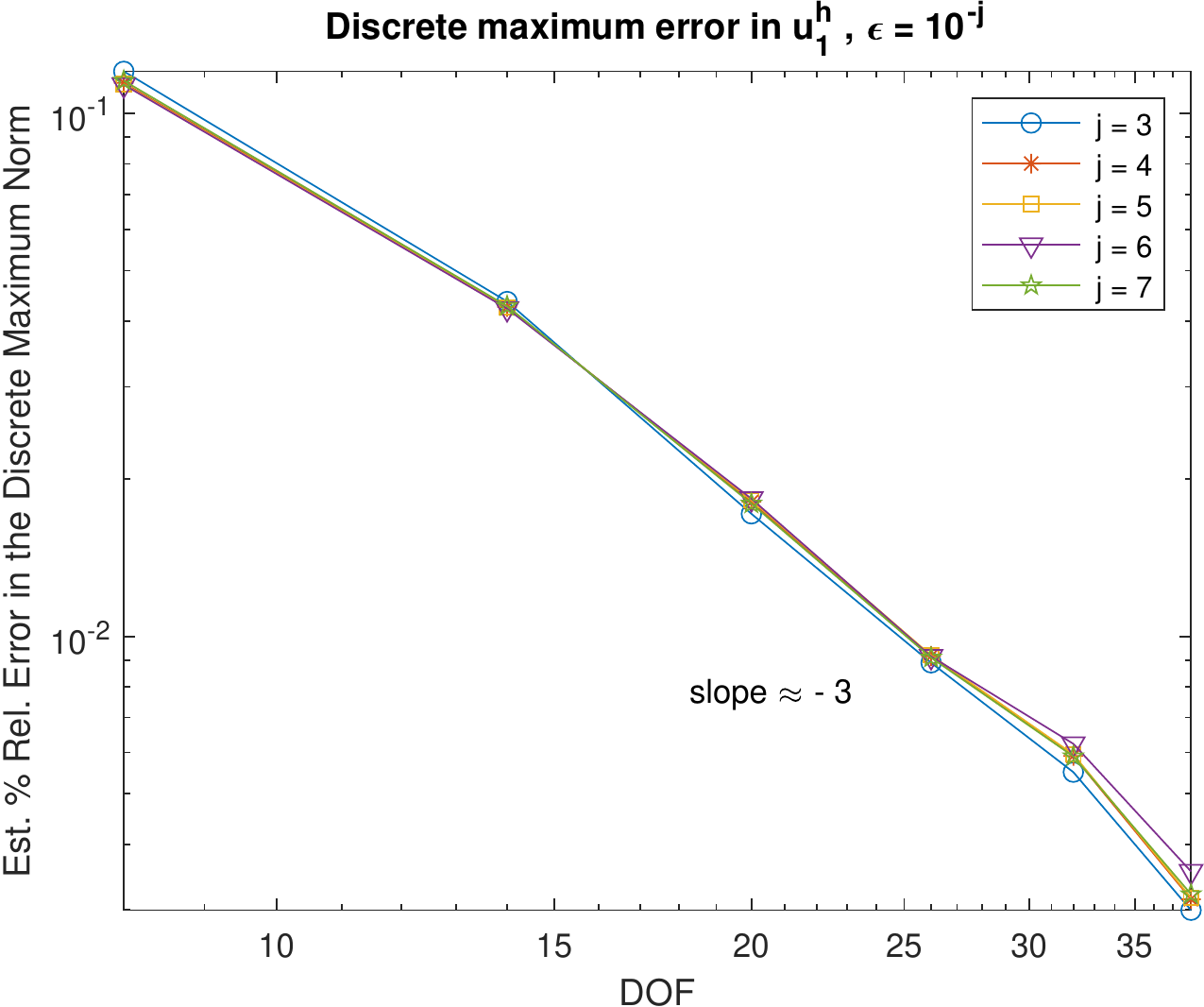} \mbox{} %
\includegraphics[width=0.4 \textwidth]{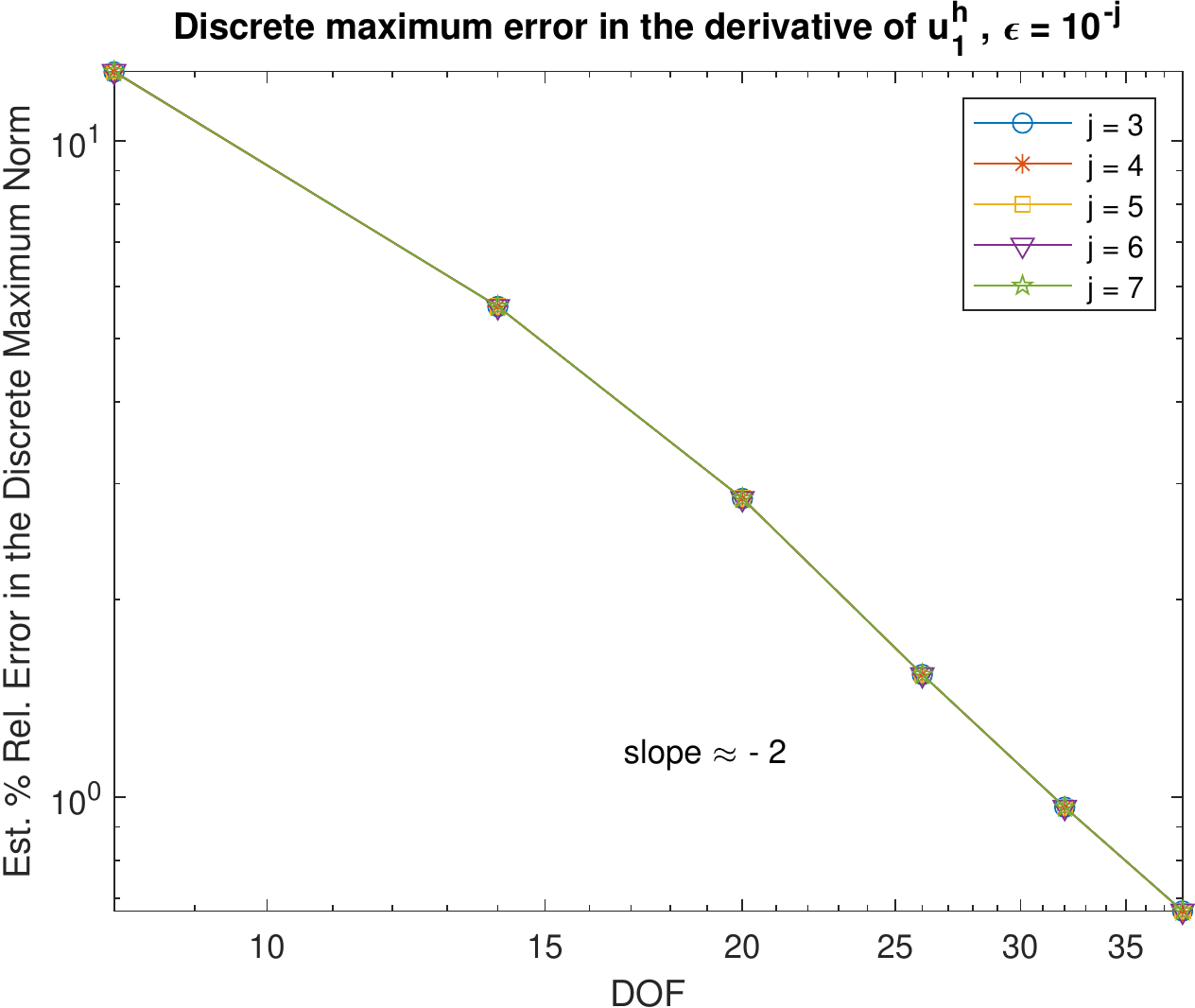}
\end{center}
\caption{Discrete maximum norm convergence for the fist eigenfunction (left)
and its first derivative (right).}
\label{evc2}
\end{figure}


\section{Conclusions}

\label{concl} We considered a singularly perturbed fourth order eigenvalue
problem and the numerical approximation of its solution using the $h$%
-version FEM with Hermite polynomials of degree $p\geq 3$ defined on an
exponentially graded mesh. We established optimal, uniform (in $\varepsilon $%
) convergence for both the eigenvalues and the eigenfunctions, when the
error was measured in absolute value and in the energy norm, respectively.
We should point out that a smallness assumption on $h$ is necessary to
establish our results and this is seen in our numerical experiments,
especially for higher modes. While the analysis was performed in
one-dimension, the results are extendable to higher dimensions, since the
boundary layer effect is one-dimensional (in the direction normal to the
boundary). Unfortunately, constructing $C^{1}$ elements in two-dimensions is
difficult -- even for simple domains. Some progress has been made \cite{WXL}%
, but we believe that a mixed formulation is a viable alternative choice.
This is the focus of our current research efforts.


\end{document}